\definecolor{purple}{cmyk}{0.05,0.8,0,0}
\else\declaretheorem[parent=section]{theorem}\fi
\else\declaretheorem[sibling=theorem]{corollary}\fi
\else\declaretheorem[sibling=theorem]{lemma}\fi
\else\declaretheorem[sibling=theorem]{proposition}\fi
\else\declaretheorem[sibling=theorem]{question}\fi
\declaretheorem[sibling=theorem, style=remark]{remark}\fi
\providecommand {\Q}{{\bf Q}}
\renewcommand {\P}{{\bf P}}
\providecommand {\A}{{\bf A}}
\providecommand {\from}{{\colon}}
\providecommand{\Aut}{\operatorname{Aut}}
\providecommand{\Pic}{\operatorname{Pic}}
\newcommand{\Hdg}{\mathcal{H}_{d,g}}
\renewcommand{\H}{\mathcal{H}_{3,g}}
\newcommand{\Hs}{\mathcal{H}^{s}_{3,g}}
\newcommand{\cC}{\mathcal{C}}
\renewcommand{\cR}{\mathcal{R}}
\newcommand{\cP}{\mathcal{P}}
\newcommand{\cT}{\mathcal{T}}
\newcommand{\tT}{\widetilde{\cT}}
\newcommand{\Hf}{\mathcal{H}^{\dagger}_{3,g}}
\newcommand{\Hfs}{\mathcal{H}^{\dagger s}_{3,g}}
\newcommand{\cM}{\mathcal{M}}
\newcommand{\cN}{\mathcal{N}}
\renewcommand{\O}{\mathcal{O}}
\renewcommand{\cH}{\mathcal{H}}
\newcommand{\Rs}{\cR^{s}}
\newcommand{\M}{\cM_{g}}
\newcommand{\cV}{\mathcal{V}}
\newcommand{\F}{{\bf F}}
\newcommand{\G}{{\mathcal G}}
\newcommand{\cI}{\mathcal{I}}
\newcommand{\cJ}{\mathcal{J}}
\newcommand{\Spec}{{\rm Spec}\,}
\renewcommand{\Pic}{{\rm Pic}\,}
\renewcommand{\Q}{{\bf Q}}
\newcommand{\pr}{{\rm pr}}
\newcommand{\Br}{{\rm Br}}
\declaretheorem[title=Theorem]{maintheorem}
\newtheorem{remark}[theorem]{Remark}
\title{On the Chow ring of the Hurwitz space of degree three covers of $\P^1$}
\author{Anand Patel \& Ravi Vakil}
\begin{document}
\maketitle


\begin{abstract}
We show that the Chow ring (with $\Q$-coefficients) of the Hurwitz space parametrizing degree $3$ covers of $\P^{1}$ is tautological, answering a question posed by N. Penev and the second author in \cite{penev-vakil}. We also compute the rational Picard groups of auxiliary spaces of degree $3$ maps with special marked points..   
\end{abstract}

\section{Introduction}

In this paper we determine the Chow ring (with $\Q$-coefficients) of the Hurwitz space $\H$ parametrizing branched covers $\alpha \from C \to \P^1$ of degree $3$ and genus $g(C) = g$ up to automorphisms of $\P^1$.  Throughout, we consider Chow rings and Chow groups with $\Q$-coefficients. 
   
  Our main theorem is:
   \begin{maintheorem}\label{thm:main}
     The Chow ring $A^{*}(\H)$  is generated by
         $\kappa_1$, and hence tautological.
   \end{maintheorem}

   \begin{remark} S. Canning and H. Larson
       have now shown that $A^*(\H) = \Q[\kappa_1]/(\kappa_1^{n_g})$
       with $n_g=3$ for $g \geq 6$, $n_g=2$ for $3 \leq g \leq 5$, and
       $n_2=1$, see \cite[~Theorem 1.1(1)]{Canning-Larson}.  In the process, they correct
       an error in an earlier version of this paper.
\end{remark}

To put \autoref{thm:main} in context, we summarize known results about the algebraic cohomology of the Hurwitz spaces $\Hdg$.  Guided by a now-standard philosophy, we can define a {\sl tautological ring} $R^{*}(\mathcal{H}_{d,g})$ as a subring of the Chow ring $A^{*}(\Hdg)$ generated by certain ``intrinsic'' classes constructed from the universal branched cover diagram 
$$
\xymatrix @C -1.1pc  {
\cC \ar[rr]^{\alpha} \ar[rd]_{\phi}&& 
\cP \ar[ld]^\pi  \\ 
& \Hdg} 
$$
 
 A variant of these classes was   first studied
  \cite{kazaryan-lando}, where they were called {\sl basic}
  classes. Among the tautological classes are the well-known kappa
  classes $\kappa_{i} := \phi_{*}(c_{1}^{i+1}(\omega_{\phi}))$, but a
  priori there are more.

A speculative geometric description of relations for $R^{*}(\Hdg)$
involves the natural stratification of $\Hdg$ by the {\sl Hurwitz
  strata} -- loci parametrizing branched covers with prescribed
branching behavior.  Kazaryan and Lando showed in
\cite{kazaryan-lando} that the fundamental classes of many Hurwitz
strata are basic. In parallel with Faber's relations in the tautological ring of $\M$ (see the discussion after Conjecture 2 of \cite{faber-tautological}), it is reasonable to wonder if such descriptions for {\em empty} strata, which give relations, might give {\em all} relations.    For example, since it is clearly impossible for a degree four map to possess a branch point of order $5$, the fundamental class of the corresponding Hurwitz stratum will be zero.  In the few cases we understand, these are all the relations.  It would be interesting to know whether all relations are obtained in this way.

 The open subset of $\Hdg$ parameterizing simply-branched coverings is called the {\sl small} Hurwitz space $\Hdg^{s}$. In all currently understood cases, the Chow groups of  $\Hdg^{s}$ are trivial.  In codimension one the ``Picard rank conjecture'' \cite{dp:pic_345} predicts that $A^{1}\Hdg^{s} = 0$. This conjecture has been verified when the degree $d \leq 5$ in \cite{dp:pic_345} (where unirational descriptions are used to study $\Hdg$) and also for degrees $d \geq 2g+1$ in \cite{mochizuki95:_hurwit}, where it is shown to be a consequence of Harer's theorem \cite{harer:Pic_Mg} on the Picard group of $\cM_{g}$.  In all cases mentioned, the equality $R^{1}(\Hdg) = A^{1}(\Hdg)$ holds, and both groups are generated by fundamental classes of divisorial Hurwitz strata: the {\sl Maxwell} stratum parametrizing covers having two ramification points mapping to the same point, and the {\sl caustic} stratum parametrizing covers having a point of nonsimple ramification.   (This language is used by Kazaryan and Lando, and is standard in the singularity theory community.  When $d \geq 4$ these two divisorial strata are known to be linearly independent \cite{dp:pic_345}.)

 So we may pose the general question:
 \begin{question}\label{question-trivial}
Is $A^{*}(\Hdg^{s}) = \Q$ for all $d, g$?
 \end{question}

As far as the ring structure of $A^{*}(\Hdg)$ is concerned, essentially nothing is known.  In the case of hyperelliptic curves, one easily sees that $A^{*}(\cH_{2,g}) = \Q$. Indeed, (up to stack-theoretic considerations which are irrelevant when dealing with $\Q$-coefficients) the moduli space $\cH_{2,g}$ is indistinguishable from the space $\cM_{0,2g+2}$, which has trivial Chow ring.  Hence, \autoref{thm:main} is the next open case to consider.

Our current understanding of the global algebraic geometry (cohomology, unirationality, point counting, etc.) of Hurwitz spaces is roughly divided into two regimes.  The {\sl low degree regime} consists of spaces parametrizing covers of degree at most $5$, and is accessible due to classical unirational parametrizations. Many basic questions (for example \autoref{question-trivial}) still remain open in this regime, but the explicit classical parametrizations provide us with some leverage which we do not have for larger $d$.  

The {\sl high degree regime} consists of Hurwitz spaces satisfying the bound $d \geq 2g+1$. In this case, the ``tower'' 
$$
\Hdg \to \Pic^{d} \to \M
$$
exhibits $\Hdg$ as a smooth fibration over $\M$. The fibration allows
us, ideally, to transfer information we may have about $\M$ to
information about $\Hdg$. (See for instance
\cite{mochizuki95:_hurwit}, and also \cite{edidin-severi} for a
different use of the ``tower''.)  Given the stabilization of the
cohomology ring of $\M$ to the polynomial ring $\Q[\kappa_{1},
\kappa_{2}, \kappa_{3}, ...]$ as $g \to \infty$ \cite{harer:Pic_Mg,
  madsen-weiss}, it seems within reach to address an asymptotic
version of \autoref{question-trivial} --- see section \ref{further}.

\begin{remark} S. Canning and H. Larson
have since shown in \cite{Canning-Larson} the striking fact that $A^i(\mathcal{H}_{4,g}^{s}) =
0$ for $1 \leq  i \leq  \frac {g-13} 4$ and $A^i(\mathcal{H}_{5,g}^{s}) =
0$ for $1 \leq  i \leq  \frac {g-76} 5$, see [CL21, Theorem 1.12].
\end{remark}

The {\sl intermediate degree range} $6 \leq d \leq 2g$ poses great challenges. However, there may still be hope in answering \autoref{question-trivial} with topological techniques as in the case of Harer's theorem \cite{harer:Pic_Mg}. Indeed, $\Hdg^{s}$ is a covering space of $\cM_{0,b}$ via the branch map, and thus it is uniformized by the Teichmuller space $\cT_{0,b}$. The braid group $\Br_{b}$ acts freely on $\cT_{0,b}$, and the orbifold quotient is $\cM_{0,b}$.  This means there is a finite index subgroup $\Gamma \subset \Br_{b}$ such that $\cT_{0,b} / \Gamma = \Hdg^{s}.$ Thus, one might concievably translate questions about the cohomology of $\Hdg^{s}$ into statements about the group cohomology of $\Gamma$.

This puts \autoref{thm:main} in context.  In the degree $3$ setting, Hurwitz strata are very easy to describe and obey very simple closure relations: $\cT_{k}$ parametrizes covers having at least $k$ triple ramification points, and $\cT_{k+1}$ is a divisor in $\cT_{k}$.  Furthermore, it is well known that the divisor class $[\cT_{1}]$ is a nonzero multiple of $\kappa_{1}$.

We also prove a version of \autoref{thm:main} for the {\sl framed} Hurwitz space $\Hf$ parametrizing degree $3$ covers of a fixed $\P^1$. There is a natural map $$q \from \Hf \to \H$$ which is a quotient by $\text{Aut}\, \P^{1} = PGL_{2}(k)$. Specifically, we have

\begin{maintheorem}\label{thm:pullback}
  The pullback homomorphism  
      $$q^{*} \from A^{*}(\H) \to A^{*}(\Hf)$$
  is an isomorphism. 
\end{maintheorem}

Finally, we arrive at an answer to \autoref{question-trivial} in the $d = 3$ case: 

\begin{maintheorem}\label{thm:trivial}
We have $A^{*}(\H^{s}) = A^{*}(\H^{\dagger s}) = \Q$.
\end{maintheorem}

The proof of \autoref{thm:main} relies on the following ingredients: 
\begin{enumerate}
\item The classical description of trigonal curves as curves on Hirzebruch surfaces.
\item  The {\sl Maroni stratification} of $\H$, and a result of the second author and N. Panev describing the Chow groups of the open Maroni strata.
\item Vistoli's theorem describing the pullback map $q^{*} \from
  A^{*}(X/G) \to A^{*}(X)$ associated to a quotient $q \from X \to
  X/G$ of a variety $X$ by an algebraic group $G$.  
\end{enumerate}

Briefly, ingredients $(1), (2),$ and $(3)$ help us understand the
Maroni stratification, which allows us to prove that $\kappa_{1}$
generates the Chow ring. 

\subsection{Remark on integral coefficients}
We point out that the remarkable paper of  Bolognesi and Vistoli
\cite{Bolognesi-Vistoli} determines the {\em integral} Picard group of
the space of trigonal curves (in all genera).
There seems no straightforward way to combine their work with ours, to get at the integral Chow groups in higher codimension.  
\begin{remark}  
S. Canning and H. Larson
    have now determined the integral Picard group for the spaces of degree
    $4$ and $5$ covers as well, see \cite[~Theorem 1.8.]{Canning-Larson}.
    \end{remark}

We also mention that the integral Chow ring of the moduli stack of hyperelliptic curves (of even genus) and the stack of cyclic covers of $\P^{1}$ have been computed in \cite{edidin-fulghesu-hyperelliptic} and \cite{fulghesu-viviani-chowcyclic}, respectively. In particular, these papers show that there are nontrivial torsion classes in the integral Chow ring.

\subsection{Outline of the paper} 
We begin in section \ref{section:prelim} by recalling classical facts about trigonal curves, particularly the stratification by Maroni invariant.   This leads to the description of (an open subset of) $\H$ as a quotient of an affine variety $\cV$ by the action of a connected algebraic group $G$. We end section \ref{section:prelim} by recalling Stankova's theorem stating that the group $A^{1}(\H)$ is freely generated by $\kappa_{1}$. 

Section \ref{section:vistoli-quotient}  begins by recalling a theorem of A. Vistoli \cite{vistoli-quotient} describing Chow rings of quotients. We apply this theorem to the $PGL_{2}(k)$ quotient map $q \from \Hf \to \H$ to deduce that the kernel of the pullback map $q^{*} \from A^{*}(\H) \to A^{*}(\Hf)$ is the ideal generated by the second Chern class of a vector bundle $V$ on $\H$, whose projectivization is $\cP$, the universal target $\P^1$. 

Section \ref{section:univ-ram-point-R}  is devoted to showing that the class $c_{2}(V)$ is supported on the divisor $\cT_{1} \subset \H$. We accomplish this by proving that $A^{1}(\Rs) = 0$, where $\Rs$ is the ``universal simple ramification point'' which is a degree $2g+4$ etale cover of $\Hs$.  

Section \ref{section:univ-trip-ram-point}  is devoted to proving that $A^{1}(\tT_{1})$ is generated by $\kappa_{1}$. Here $\tT_{1}$ denotes the moduli space of branched triple covers with a marked point of triple ramification -- it is the normalization of $\cT_{1}$.  The techniques used are similar to those of section \ref{section:univ-ram-point-R}.  When combined with the results from sections \ref{section:vistoli-quotient}  and \ref{section:univ-ram-point-R}, we deduce that $c_{2}(V)$ is a multiple of the class $\kappa_{1}^{2}$.

In section \ref{section:framed-chow-ring}, we show that the fundamental classes of the Maroni strata in $\Hf$ are expressible in kappa classes.  This, along with ingredient number $2$ mentioned above, allows us to conclude that $A^{*}(\H)$ is generated by the kappa classes.

\subsection{Notation and conventions} We work over an algebraically closed field $k$ of characteristic $0$. All Chow rings and tautological rings will be taken with coefficients in $\Q$. As a result, the distinction between ``stack'' and  ``coarse space'' will be irrelevant for us.

The following spaces will appear prominently in the paper: 

\begin{enumerate}
\item [$\H$:] This is the Hurwitz space parametrizing genus $g$, degree $3$ covers of $\alpha \from C \to \P^1$, with $C$ smooth, up to automorphisms of $\P^1$.
\item [$\Hs$:] This is the open substack of $\H$ parametrizing those covers which are simply branched.
\item [$\Hf$:] This is the stack parametrizing genus $g$, degree $3$ covers of a {\sl fixed} $\P^1$, with smooth source curve.
\item [$\Hfs$:] This is the open substack of $\Hf$ parametrizing simply branched covers.
\item [$\M$:] This is the moduli space of smooth genus $g$ curves.
\end{enumerate}
In general, the superscript $s$ will denote the adjective ``simple'', and a $\dagger$ will denote the adjective ``framed''.  Also, the superscript $\circ$ will stand for ``minimal Maroni invariant''.

\subsection*{Acknowledgments} We thank Gabriel Bujokas,
  Francesco Cavazzani, Anand
Deopurkar, and Joe Harris  for valuable conversations.
We thank in particular Samir Canning and Hannah Larson for important
conversations, for catching a serious error in an earlier version
of this paper, and for significantly improving on these results in \cite{Canning-Larson}.
We thank the anonymous referee for several helpful suggestions and corrections.

\section{Preliminaries}\label{section:prelim}
\subsection{Trigonal curves as divisors on Hirzebruch surfaces} \label{Maroni}
It is a classical fact that every degree three cover $\alpha \from C
\to \P^1$ has a canonical realization as a curve $C \subset \F_{m}$ in
a Hirzebruch surface $\F_{m}$, with the projection $\pi \from \F_{m}
\to \P^1$ inducing the map $\alpha$.  Geometrically, the Hirzebruch
surface $\F_{m}$ can be realized as the surface swept out by the lines
$l_{D} \subset \P(C, \mathcal{K}_C)^\vee \cong \P^{g-1}$ spanned by the divisors $D$ of the given $g^{1}_{3}$.  

\subsubsection{The Maroni invariant} The {\sl Maroni invariant} of a degree three cover $\alpha \from C \to \P^1$ is the integer $m$ where $C \subset \F_{m}$ \cite{maroni}.  Define the closed substack of $\cH_{3,g}$:
$$\cN_{k} := \{ \alpha \in \H \mid \text{the Maroni invariant of $\alpha$ is $\geq k$}\}.$$

The behavior of the Maroni invariant is summarized in the following well-known proposition:

\begin{proposition} \label{mar:properties}
\begin{enumerate}
\item The Maroni invariant $m$ does not exceed $(g+2)/3$. 
\item The Maroni invariant is upper semi-continuous on $\H$. When $g$ is even, $m$ takes on any even value from $0$ to $\lfloor(g+2)/3 \rfloor$. When $g$ is odd, $m$ takes on any odd value from $1$ to $\lfloor(g+2)/3 \rfloor$. 
\item When nonempty, the codimension of $\cN_{k}$ is ${\rm max}\,\{0, k-1\}$.
\end{enumerate}
\end{proposition}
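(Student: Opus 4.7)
The plan is to exploit throughout the Hirzebruch surface embedding $C \hookrightarrow \F_m$ recalled above. First, writing $\alpha_* \O_C = \O_{\P^1} \oplus F$ for a rank two bundle $F$, one deduces from $\chi(C, \O_C) = \chi(\P^1, \alpha_* \O_C)$ that $\deg F = -(g+2)$, so $F = \O(-a) \oplus \O(-b)$ with $a \leq b$, $a + b = g+2$, and the Maroni invariant is $m = b - a$. The curve $C$ embeds in $\P(F^\vee) \cong \F_m$ with divisor class $[C] = 3\sigma + nf$, where $\sigma$ is the negative section ($\sigma^2 = -m$) and $f$ is a fiber. Applying adjunction with $K_{\F_m} = -2\sigma - (m+2)f$ yields $n = (g+2+3m)/2$, whose integrality immediately forces the parity constraint $g \equiv m \pmod{2}$ stated in (2). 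Since $C$ is irreducible and distinct from $\sigma$, one has $C \cdot \sigma = n - 3m \geq 0$, which rearranges to $m \leq (g+2)/3$, proving (1).

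For the upper semi-continuity in (2), I would observe that $m \geq k$ is equivalent to the existence of a line sub-bundle of $F^\vee$ of degree $\geq (g+2+k)/2$, equivalently to a jump in $h^0(\P^1, F^\vee(-d))$ for $d = (g+2+k)/2$. Semicontinuity of $h^0$ applied to the flat family of pushforward bundles over $\H$ then shows $\cN_k$ is closed. To realize each parity-compatible value $m \leq (g+2)/3$, I would produce a smooth member of $|3\sigma + nf|$ on $\F_m$ via Bertini: since $n - 3m \geq 0$, the linear system is base-point free off $\sigma$, and transversality along $\sigma$ can be checked by local computation at each of the $n-3m$ intersection points.

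For (3), I would compute $h^0(\F_m, 3\sigma + nf) = \sum_{j=0}^{3}\max(0, n - jm + 1) = 4n - 6m + 4 = 2g + 8$, so $\dim |3\sigma + nf| = 2g + 7$. Combined with $\dim \Aut(\F_m) = m + 5$ for $m \geq 1$ (and $6$ for $\F_0$), the open Maroni stratum with invariant exactly $k$ has dimension $2g + 2 - k$ for $k \geq 1$ and $2g + 1$ for $k = 0$, hence codimension $\max\{0, k-1\}$ in $\H$ for each parity-compatible $k$. The main obstacle is the bookkeeping involved: handling the extra factor-swap symmetry of $\F_0$, checking irreducibility of each open stratum (which reduces to irreducibility of the linear system $|3\sigma + nf|$ and connectedness of $\Aut(\F_m)$), and reconciling the clean formula $\codim \cN_k = \max\{0, k-1\}$ with the fact that $\cN_k = \cN_{k+1}$ whenever $k$ fails the parity condition $k \equiv g \pmod{2}$.
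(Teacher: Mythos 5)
The paper offers no proof of this proposition at all --- it is stated as ``well-known'' classical material going back to Maroni --- so there is nothing in the text to compare your argument against; what matters is whether your blind proof is sound, and it essentially is. The adjunction computation giving $n=(g+2+3m)/2$ does force $m\equiv g \pmod 2$, and $C\cdot\sigma=n-3m\ge 0$ is legitimate because the irreducible curve $C$ has fiber degree $3$ and so cannot contain the fiber-degree-$1$ section $\sigma$; this yields $m\le (g+2)/3$. The count $h^0(\F_m,3\sigma+nf)=4n-6m+4=2g+8$, together with $\dim\Aut(\F_m)=m+5$ for $m\ge 1$ (and $6$ for $\F_0$) and $\dim\H=b-3=2g+1$, gives exactly the codimensions claimed, and existence of smooth irreducible members of $|3\sigma+nf|$ for every parity-compatible $m\le(g+2)/3$ follows from base-point-freeness ($n\ge 3m$) and Bertini in characteristic $0$.

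Two small points deserve attention. First, for semicontinuity the twist $E^\vee(-d)$ does not literally make sense over $\H$, where the universal target $\cP$ is a nontrivial $\P^1$-bundle with no global relative $\O(1)$; you should either work on $\Hf$ (trivialized target) or note that closedness of $\cN_k$ may be checked after a smooth surjective base change, after which the argument is as you say. Second, your final worry is well-founded and is really a defect of the statement rather than of your proof: since $\cN_k=\cN_{k+1}$ when $k\not\equiv g\pmod 2$, the formula $\codim \cN_k=\max\{0,k-1\}$ fails as literally written for such $k$ (for $g$ even, $\cN_3=\cN_4$ has codimension $3$). The proposition must be read for $k\equiv g\pmod 2$, and your dimension count establishes precisely that corrected version.
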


\subsection{Expressing an open subset of $\H$ as a quotient}\label{section:quotient-Hurwitz} 
Let $\H^{\circ} \subset \H$ be the open subset parametrizing covers with minimal Maroni invariant ($\cH^\circ_{3,g} = \cH_{3,g} \setminus \cN_2$). We can represent $\H^{\circ}$ as a quotient in a straightforward way. 

Let $\F$ denote either $\F_{0}$ or $\F_{1}$, depending on whether $g$ is even or odd, respectively.  Let $\pi \from \F \to \P^1$ be a selected projection.   Then there is a unique curve class $C$ on $\F$ parametrizing genus $g$ curves which map with degree $3$ under the projection $\pi$. (This is clearly true for any Hirzeburch surface, not just the two considered in this section.)

Consider the linear system $|C| \simeq \P^{N}$ with $\Delta \subset \P^{N}$ the discriminant hypersurface parametrizing singular curves.  The complement $$U_{g} := \P^{N} \setminus \Delta$$ parametrizes all smooth trigonal curves with minimal Maroni invariant. 

There is a natural surjective map 
$$
u \from U_{g} \to \H^{\circ}
$$

The map $u$ is a quotient map: $\Aut \F$ acts on the variety $U_{g}$ with finite stabilizers, and $\H^{\circ}$ is the geometric quotient.  The complement $\H \setminus \H^{\circ}$ is a divisor only when $g$ is even -- in the odd case, this complement has codimension two. 

\subsection{The divisor theory of $\H$} In \cite[Thm.~IV]{stankova}, Stankova proves: 
\begin{theorem}[Stankova]\label{divH}
The group $A^{1}(\H)$ is freely generated by $\kappa_{1}$.
\end{theorem}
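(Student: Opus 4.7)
My plan is to use the quotient presentation of \S\ref{section:quotient-Hurwitz} to bound $\dim_{\Q} A^{1}(\H) \leq 1$, then to exhibit a one-parameter family showing $\kappa_{1} \neq 0$; together these force $A^{1}(\H) = \Q \cdot \kappa_{1}$. Because $\H \setminus \H^{\circ}$ has codimension at least two when $g$ is odd but equals the Maroni divisor $\cN_{2}$ (codimension one) when $g$ is even, the two parities require slightly different arguments.

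Write $G = \Aut(\F, \pi)$, so that $\H^{\circ} = U_{g}/G$ with $U_{g} = \P^{N} \setminus \Delta$. Equivariant excision gives a surjection $A^{1}_{G}(\P^{N}) \twoheadrightarrow A^{1}_{G}(U_{g})$ with kernel $\Q \cdot [\Delta]_{G}$, while the standard description of equivariant projective space yields $A^{1}_{G}(\P^{N}) \cong A^{1}_{G}(\operatorname{pt}) \oplus \Q \cdot h$, where $A^{1}_{G}(\operatorname{pt}) = X(G) \otimes \Q$ is the rational character group. For $g$ even, $G \cong \PGL_{2} \times \PGL_{2}$ is semisimple, so $X(G) = 0$ and $A^{1}_{G}(\P^{N}) = \Q \cdot h$; since $[\Delta]_{G}$ maps to a positive multiple of $H$ under the forgetful map $A^{1}_{G}(\P^{N}) \to A^{1}(\P^{N})$, it is a nonzero multiple of $h$, hence $A^{1}(\H^{\circ}) = 0$. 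The excision sequence $A^{0}(\cN_{2}) \to A^{1}(\H) \to A^{1}(\H^{\circ}) \to 0$ then gives $\dim A^{1}(\H) \leq 1$. For $g$ odd, $\Aut(\F_{1}, \pi)$ has reductive quotient $\PGL_{2} \times \Gm$ (with $\Gm$ acting by scaling on fibers), so $X(G) \otimes \Q \cong \Q$ and $A^{1}_{G}(\P^{N})$ has rank two; the single relation $[\Delta]_{G}$ still has nonzero $h$-component by the same non-equivariant-degree argument, and so reduces the rank to one. Since $\codim(\H \setminus \H^{\circ}) \geq 2$, we obtain $A^{1}(\H) = A^{1}(\H^{\circ})$ of dimension at most one.

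To produce a family with $\kappa_{1} \neq 0$, I would pull back along a generic pencil $B \subset |C|$ in the linear system on $\F$: the resulting family of trigonal curves has smooth total space, and adjunction together with elementary intersection theory on $\F$ computes the degree of $\kappa_{1} = \phi_{*}(c_{1}(\omega_{\phi})^{2})$ on $B$ as an explicit positive rational number. Combined with the upper bound above, this yields $A^{1}(\H) = \Q \cdot \kappa_{1}$.

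The main obstacle I foresee is the equivariant bookkeeping in the second paragraph: establishing $A^{*}(\H^{\circ}) = A^{*}_{G}(U_{g})$ (using that the $G$-action on $U_{g}$ has finite stabilizers, as recalled in the text) and carefully tracking the $h$-component of $[\Delta]_{G}$ in the odd case so that the quotient really is of rank one rather than two. By contrast, the test-pencil computation of $\kappa_{1}$ itself is a routine application of Riemann--Hurwitz and adjunction on the Hirzebruch surface.
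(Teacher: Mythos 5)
The equivariant upper bound $\dim_{\Q}A^{1}(\H)\leq 1$ in your first two paragraphs is sound, and it is essentially the route the paper itself gestures at: the statement is cited from Stankova, with the remark that it is a consequence of Vistoli's theorem applied to the presentation $\H^{\circ}=U_{g}/\Aut\F$. Your bookkeeping with $X(G)\otimes\Q$ and the class of $\Delta$ is a legitimate way to make that precise (modulo checking that $\Aut\F$ has the reductive quotients you claim, which is standard).

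The gap is in the nonvanishing of $\kappa_{1}$. A generic pencil $B\subset|C|$ necessarily meets the discriminant $\Delta$ (an ample hypersurface), so the induced map to $\H$ is defined only on the affine curve $B\setminus\Delta$: the pencil is \emph{not} a complete curve in $\H$, and the number $c_{1}(\omega_{\cX/B})^{2}$ computed on the blown-up total space is a degree on a compactified family, not a pairing of $\kappa_{1}\in A^{1}(\H)$ against a cycle in $\H$. Since $\H$ is not proper, such a degree cannot detect nontriviality of a divisor class. The failure is concrete, not hypothetical: in the even-genus case your own first paragraph shows $A^{1}(\H^{\circ})=0$, and all smooth members of the pencil have minimal Maroni invariant, so $\kappa_{1}$ restricts to \emph{zero} on the locus your test family actually covers --- the positive intersection number you would compute is entirely absorbed by the singular fibers. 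What is actually needed is either (i) for $g$ odd, an identification of $\kappa_{1}|_{\H^{\circ}}$ with a specific nontrivial character of $\Aut\F_{1}$ (e.g.\ via an equivariant Grothendieck--Riemann--Roch computation), and for $g$ even, a proof both that $\kappa_{1}$ is a nonzero multiple of $[\cN_{2}]$ and that $[\cN_{2}]\neq 0$ in $A^{1}(\H)$; or (ii) an appeal to the explicit computations of Stankova or Bolognesi--Vistoli. This nontriviality statement is precisely the content that the quotient presentation plus a test pencil cannot supply.
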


The theorem also holds for $\Hf$.  These statements are all consequences of a general theorem on Chow rings of quotient varieties due to A. Vistoli.  We review this theorem in the next section.

\section{Vistoli's theorem: Chow rings of quotients}\label{section:vistoli-quotient}

\subsection{Vistoli's theorem}

 We recall a theorem of A. Vistoli: 
\begin{theorem}[Vistoli] \label{vistoli}
Let $q \from X \to Y$ be a (categorical) quotient of a smooth stack $X$ by $G = GL_{n}(k)$ or $SL_{n}(k)$. and let $V := X \times \A^{n} / G$ be the canonically defined vector bundle on $Y$. (Note that if  $G = SL_{n}$, then $c_{1}(V) = 0$.) Then: 
\begin{enumerate}
\item[(i)] The pullback map $q^{*} \from A^{*}(Y) \to A^{*}(X)$ is surjective.
\item[(ii)] The kernel of $q^{*}$ is the ideal generated by all Chern classes $c_{i}(V)$.
\end{enumerate}
\end{theorem}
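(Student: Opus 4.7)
The plan is to realize $X$ as an open subvariety inside a vector bundle over $Y$, then exploit the Chow localization exact sequence. First I would reduce to the case $G = \GL_n$: since $\SL_n$ sits in $\GL_n$ as a normal subgroup with quotient $\Gm$, and the $\Gm$-case is handled by the projective bundle formula applied to $\P(V) \to Y$, one can chain the two assertions through the intermediate quotient $X/\SL_n \to X/\GL_n$. So assume $G = \GL_n$. By the universal property of the quotient, $X$ is canonically isomorphic to the frame bundle $\mathrm{Isom}(\O_Y^n, V)$, which sits inside $\Hom(\O_Y^n, V) = V^{\oplus n}$ as the open locus where the $n$ tautological sections form a basis at every point.

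Write $E = V^{\oplus n}$, a rank-$n^2$ vector bundle on $Y$, and let $Z := E \setminus X$ denote the degeneracy closed subscheme. The Chow localization sequence reads
\[
A^*(Z) \to A^*(E) \to A^*(X) \to 0,
\]
and by homotopy invariance for vector bundles, $A^*(E) \cong A^*(Y)$ via zero-section pullback. Unwinding the identifications, the composition $A^*(Y) \cong A^*(E) \to A^*(X)$ agrees with $q^*$, which establishes the surjectivity assertion (i). For (ii), it suffices to show that the image of $A^*(Z) \to A^*(Y)$ is precisely the ideal $(c_1(V), \ldots, c_n(V))$.

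To this end, I would stratify $Z$ by rank: let $Z_k \subset E$ be the closed subscheme where the tautological map $\O^n \to V$ has rank $\leq k$, so $Z = Z_{n-1} \supset Z_{n-2} \supset \cdots \supset Z_0$. Each stratum $Z_k \setminus Z_{k-1}$ fibers over a product of relative Grassmannian bundles parametrizing image and coimage, and the fundamental classes $[Z_k]$ push forward to Schur polynomials in the Chern classes of $V$ via the Thom--Porteous formula. In particular, $[Z_{n-1}]$ pushes forward to $c_n(V)$ (up to a sign), and by intersecting with classes pulled back from $Y$ and passing to deeper strata one generates each $c_i(V)$. The reverse inclusion, that every class in $(c_1(V), \ldots, c_n(V))$ arises from $Z$, follows by exhibiting each $c_i(V)$ as the pushforward of an explicit cycle on an appropriate stratum.

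The main obstacle I anticipate is the careful bookkeeping needed to verify that the image of $A^*(Z) \to A^*(Y)$ is \emph{exactly} the ideal generated by the $c_i(V)$, with no spurious relations and no missing generators. A cleaner alternative, which I would pursue in parallel, recasts everything in equivariant intersection theory: identify $A^*(Y)$ with the $G$-equivariant Chow ring $A^*_G(X)$, and $q^*$ with restriction along the inclusion of the trivial subgroup. Since $A^*_{\GL_n}(\mathrm{pt}) = \Q[c_1, \ldots, c_n]$ acts on $A^*_G(X)$ with $c_i$ acting by multiplication by $c_i(V)$, a Leray--Hirsch-type argument for the principal $G$-bundle $X \to Y$ gives the desired presentation directly, bypassing the Porteous bookkeeping.
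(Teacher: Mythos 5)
The paper does not actually prove this statement: part (i) is quoted from Vistoli's theorem, and part (ii) is deduced from it by the descending induction on codimension of Penev--Vakil. Your proposal is therefore a from-scratch argument, and its skeleton is the standard modern one (essentially how Edidin--Graham and Totaro present $A^*_{GL_n}(\mathrm{pt})$): identify $X$ with the frame bundle $\mathrm{Isom}(\O_Y^n, V)$, embed it as an open substack of a vector bundle over $Y$, and combine the localization sequence with homotopy invariance. Part (i) comes out cleanly this way, and your reduction of $SL_n$ to $GL_n$ through the $\Gm$-torsor $X/SL_n \to X/GL_n$ (the complement of the zero section of $\det V$, which kills exactly $c_1(V)$) is sound.

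The gap is in part (ii), precisely where you flagged it, and it is compounded by a concrete error: for the tautological map $\O_Y^n \to V$ between bundles of equal rank $n$, Thom--Porteous gives $[Z_{n-1}] = c_1(V)$, not $\pm c_n(V)$; more generally the classes $[Z_k]$ are Schur determinants such as $[Z_{n-2}] = c_2(V)^2 - c_1(V)c_3(V)$, so the fundamental classes of the strata do not by themselves generate the ideal $(c_1(V),\ldots,c_n(V))$, and exhibiting each $c_i(V)$ as the pushforward of some auxiliary cycle on a stratum --- while also ruling out anything outside the ideal --- is genuinely painful by this route. The standard repair is to replace the one-shot stratification of $V^{\oplus n}$ by a tower: first pass to $V \setminus 0_Y$, whose Chow ring is $A^*(Y)/(c_n(V))$ because the zero-section pushforward composed with homotopy invariance is multiplication by the Euler class; over $V \setminus 0_Y$ the tautological nowhere-vanishing section yields a quotient bundle $Q = V/\O$ of rank $n-1$ with $c_i(Q) = c_i(V)$, and one removes the zero section of $Q$, and so on. After $n$ steps one arrives at the frame bundle $X$ having killed exactly $(c_1(V),\ldots,c_n(V))$, with no Porteous bookkeeping and with the ``exactly the ideal'' direction automatic at each stage. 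Your equivariant alternative is morally this same tower in disguise, but ``a Leray--Hirsch-type argument for the principal $G$-bundle'' is not an off-the-shelf tool in Chow theory, so as written it does not close the gap either.
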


Part (i) is \cite[Thm.~2]{vistoli-quotient}, and (ii) follows
from that same Theorem,
 by induction on codimension, as in \cite[Thm.~2.4]{penev-vakil}.

In fact, we need the analogous theorem for actions of the group $PGL_{n}(k)$.  The $SL_{n}(k)$ case is essentially equivalent to the $PGL_{n}(k)$ case when we consider $\Q$-coefficients. Indeed, suppose $PGL_{2}(k)$ acts on $X$ (with finite stabilizers).  The stack quotient $[X/SL_{n}(k)]$ is then an etale $\mu_{n}$-gerbe over the stack quotient $[X/PGL_{n}(k)]$, and therefore the Chow rings with $\Q$-coefficients of both quotients coincide. 

Furthermore, the vector bundle $V$ technically only exists over $[X/SL_{n}(k)]$, but we abuse notation and make arguments about it as if it were a bundle over $[X/PGL_{n}(k)]$.  Since we are only concerned with the rational Chern classes of $V$, this abuse will be harmless.

\subsection{The quotient map $q \from \Hf \to \H$ and the class $c_{2}(V)$}\label{section:quotientmap-c2}

Let $q \from \H^{\dagger} \to \H$ be the natural $PGL_2(k)$ quotient map. By \autoref{vistoli}, we see

$$
A^{*}(\H^{\dagger}) = A^{*}(\H) / (c_2(V)).
$$
Here $c_{2}(V)$ is the second Chern class of a vector bundle $V$ on $\H$ with trivial first Chern class, parametrizing the universal $g^{1}_{3}$.  We may think of $c_{2}(V)$ as a class in $A^{2}(\H)$ as follows: Let $W$ be any rank $2$ bundle whose  projectivization is the target $\cP$ in the universal diagram: 
$$
\xymatrix @C -1.1pc  {
\cC \ar[rr]^{\alpha} \ar[rd]_f&&
\cP \ar[ld]^\pi \\
& \H}
$$
 Then we take $c_{2}(V)$ to be the expression $c_{2}(W) - \frac{1}{4}c_{1}^{2}(W)$.  Note that this class is preserved after tensoring $W$ by a line bundle, and therefore does not depend on the choice of $W$.

By Vistoli's theorem \ref{vistoli}, in order to get a handle on the Chow ring of $\H$, we must first understand the class $c_{2}(V)$ and then compute $A^{*}(\Hf)$. 

\autoref{c2} states that $c_{2}(V)$ is tautological.  Its proof relies on an understanding of the divisor theory of the ``universal ramification point'' $\cR$, and the ``universal triple ramification point'' $\tT_{1}$.

\section{The universal ramification point $\cR$}\label{section:univ-ram-point-R}

In this section we study the divisor theory of the ``universal ramification point'' $\cR \subset \cC$ defined functorially as: 
$$\cR = \left\{(\alpha, p) \mid [\alpha \from C \to \P^1] \in \H, \, \text{and} \, p \in C \, \, \text{a ramification point of $\alpha$}\right\}.$$  If the reader prefers, $\cR$ is also the ramification locus of the universal branched covering $\alpha \from \cC \to \cP$.  We let $\phi \from \cR \to \H$ denote the natural, degree $2g+4$ finite projection map. 

The space $\cR$ has a Zariski open substack $\Rs \subset \cR$ defined functorially as: 
$$\Rs = \left\{(\alpha, p) \mid [\alpha \from C \to \P^1] \in \Hs, \, \text{and} \, p \in C \, \, \text{a ramification point of $\alpha$}\right\}.$$ In other words, $\Rs = \phi^{-1}(\Hs).$ 

Our strategy, as usual, will be to express $\cR$ (or an open substack of it) as a quotient, and then use Vistoli's theorem \ref{vistoli}.

\subsection{Expressing an open substack of $\cR$ as a quotient}\label{section:quotient-R} 
Let us fix a Hirzebruch surface $\F = \F_{0}$ or $\F_{1}$, and the appropriate linear system $|C| \simeq \P^{N}$ parametrizing  trigonal curves of genus $g$.  Define the following open subset of the universal ramification point $\cR$: $$\cR^{\circ} := \left\{(\alpha, p) \mid \text{$\alpha$ has minimal Maroni invariant, $p$  a ramification of $\alpha$}\right \}.$$ 
The complement $\cR \setminus \cR^{\circ}$ is a divisor only when $g$ is even, in which case it is the pullback of the Maroni divisor in $\H$, and therefore a multiple of $\kappa_{1}$.  Note that $\cR^{\circ}$ contains the open subset $\Rs \cap \cR^{\circ}$, which we denote by $(\Rs)^{\circ}$.

We now express $\cR^{\circ}$ as a quotient.  Let $W$ be the vector bundle on $\F$ whose fiber at a point $p \in \F$ is given by $$W|_{p} = H^{0}(\F, \O_{\F}(C) \otimes \cI_{2p}).$$ Here, by ``$np$'' we mean the scheme $\Spec k[\epsilon]/(\epsilon^{n}) \subset \F$ with closed point $p$, and lying in the ruling line $L_{p} \subset \F$ determined by $p \in \F$. 

The space $\cR^{\circ}$ is a quotient of a suitable open subset of the projective bundle $\P W$ by the obvious action of the group $G := \Aut \F$. In order to understand the bundle $\P W$ better, it will be necessary first to formally construct $W$. For this, we need to introduce some jet bundles.
\subsubsection{Jet bundles on the Hirzebruch surface $\F$} 

The next subsections are more technical, so we briefly sketch our intentions before proceeding.  Our primary objective is to compute divisor classes of several naturally occurring divisors in the projective bundle $\P W$, which itself needs to be constructed. If we informally think of $\P W$ as parametrizing pairs $(C,p)$ where $\pi: C\to \P^{1}$ is ramified at $p$, then we seek to understand the divisors in $\P W$ parametrizing curves which are either singular at $p$, or have a higher order ramification point at $p$.  In order to compute the divisor classes of these loci, we make extensive use of jet bundles to construct the vector bundle $W$ as well as two vector subbundles of $W$ whose projectivizations provide the two divisors whose classes we need to compute.

To begin, consider the fiber product $\F \times_{\P^{1}} \F$ with its two projections $\pr_{1}$ and $\pr_{2}$.  Let $\Delta \subset \F \times_{\P^1} \F$ be the diagonal, and let $\cI_{\Delta}$ be its ideal sheaf. 

Then the $n$'th order jet bundle of $\O_{\F}(C)$ relative to $\pi \from \F \to \P^{1}$ is defined to be 
\begin{equation}\label{njet}
J_{\pi}^{n}(\O_{\F}(C)) := \pr_{1*}\left(\pr_{2}^{*}(\O_{\F}(C)) \otimes \O_{\F \times_{\P^1} \F}/\cI^{n}_{\Delta}\right).
\end{equation} 

It is easy to see that $J_{\pi}^{n}(\O_{\F}(C))$ is a rank $n$ vector bundle on $\F$, and that there is a natural evaluation map $$ev \from H^{0}(\F, \O_{\F}(C)) \otimes \O_{\F} \to J_{\pi}^{n}(\O_{\F}(C)). $$ Furthermore, the jet bundles $J_{\pi}^{n}(\O_{\F}(C))$ are related by exact sequences 
\begin{equation}\label{filter}
0 \to \omega_{\pi}^{\otimes n-1} \otimes \O_{\F}(C) \to J_{\pi}^{n}(\O_{\F}(C)) \to J_{\pi}^{n-1}(\O_{\F}(C)) \to 0.
\end{equation}

The vector bundle $W$ is defined to be the kernel in the sequence 
$$
0 \to W \to  H^{0}(\F, \O_{\F}(C)) \otimes \O_{\F} \to J_{\pi}^{2}(\O_{\F}(C)) \to 0.
$$
Surjectivity of $ev \from H^{0}(\F, \O_{\F}(C)) \otimes \O_{\F} \to J_{\pi}^{2}(\O_{\F}(C))$ needs to be checked for our particular divisor classes $C$, but this is straightforward and we omit it.  It is also straightforward to check that $ev \from H^{0}(\F, \O_{\F}(C)) \otimes \O_{\F} \to J_{\pi}^{3}(\O_{\F}(C))$ is surjective, and we call its kernel $W_{\text{tr}}$. (Here $tr$ stands for ``triple''.) From sequence \eqref{filter}, we see that there is an exact sequence of vector bundles
 \begin{equation}\label{Wtr}
0 \to W_{\text{tr}} \to W \to \O_{\F}(C) \otimes \omega_{\pi}^{\otimes 2} \to 0.
\end{equation}
 
\begin{remark}
The projective subbundle $\P W_{\text{tr}} \subset \P W$ parametrizes the pairs $(C,p)$ where $p$ is a triple ramification point of $C$.
\end{remark}

Finally, we construct a third jet bundle which we call $J_{\text{fat}}^{3}(\O_{\F}(C))$.  In slight contrast  with the previous construction, we consider the diagonal $\Delta$ as a subscheme of the {\sl absolute} product $\F \times \F$.  We continue referring to the projections by $\pr_{1}$ and $\pr_{2}$. Letting $\cJ_{\Delta}$ denote the ideal sheaf, we define
$$
 J_{\text{fat}}^{3}(\O_{\F}(C)) := \pr_{1*}\left(\pr_{2}^{*}(\O_{\F}(C)) \otimes \O_{\F \times \F}/\cJ^{2}_{\Delta}\right)
$$
(Note: The superscript $3$ indicates that $J_{\text{fat}}^{3}(\O_{\F}(C))$ is a rank $3$ vector bundle on $\F$.)

As before, there is an evaluation map $$ev \from H^{0}(\F, \O_{\F}(C)) \otimes \O_{\F} \to  J_{\text{fat}}^{3}(\O_{\F}(C))$$ which is surjective because the linear system $|C|$ is sufficiently positive in all cases we consider, and therefore separates tangent vectors. The kernel of $ev$ is a vector bundle which we denote by $W_{n}$. ($n$ stands for ``node''.)

\begin{lemma}\label{fatvsrel}
There is an exact sequence of vector bundles on $\F$:
\begin{equation}\label{eq:fatvsrel}
0 \to \pi^{*}(\omega_{\P^1}) \otimes \O_{\F}(C) \to J_{\text{fat}}^{3}(\O_{\F}(C)) \to J_{\pi}^{2}(\O_{\F}(C)) \to 0.
\end{equation}
\end{lemma}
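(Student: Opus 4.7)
The plan is to derive the stated sequence from a short exact sequence of sheaves on $\F \times \F$ supported in a thickening of the diagonal, and then push forward by $\pr_{1}$.

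First I would set up the two diagonals. Let $\cJ = \cJ_{\Delta}$ be the ideal of the diagonal in $\F \times \F$, and let $\cK$ be the ideal of the closed immersion $\F \times_{\P^{1}} \F \hookrightarrow \F \times \F$. Since the diagonal of $\F$ factors through the fibre product, we have $\cK \subset \cJ$, and the ideal $\cI_{\Delta}$ of $\Delta$ inside $\F \times_{\P^{1}}\F$ is the image of $\cJ$ in $\O_{\F \times \F}/\cK$. A direct calculation gives
\[
\O_{\F\times_{\P^{1}}\F}\big/\cI_{\Delta}^{2} \;=\; \O_{\F\times\F}\big/(\cJ^{2}+\cK),
\]
so the natural surjection $\O_{\F\times\F}/\cJ^{2} \twoheadrightarrow \O_{\F\times_{\P^{1}}\F}/\cI_{\Delta}^{2}$ has kernel $(\cJ^{2}+\cK)/\cJ^{2} \cong \cK/(\cK \cap \cJ^{2})$.

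Next I would compute this kernel explicitly. Working in local coordinates $(x,t)$ on $\F$ in which $\pi$ is projection to $t$, so that $\cJ = (x_{1}-x_{2},\,t_{1}-t_{2})$ and $\cK=(t_{1}-t_{2})$, one checks that $\cK \cap \cJ^{2} = \cK \cdot \cJ$. Hence
\[
\cK/(\cK\cap\cJ^{2}) \;=\; \cK/\cK\cdot\cJ \;=\; \cK\otimes_{\O_{\F\times\F}}\O_{\Delta}.
\]
Because $\cK$ is the pullback via $\pi\times\pi$ of the ideal of $\Delta_{\P^{1}} \subset \P^{1}\times\P^{1}$, its conormal along $\Delta$ is $\pi^{*}$ of the conormal of $\Delta_{\P^{1}}$, i.e.\ $\pi^{*}\omega_{\P^{1}}$ (viewed on $\Delta\cong \F$). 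This produces the short exact sequence of $\O_{\F\times\F}$-modules
\[
0 \longrightarrow \pi^{*}\omega_{\P^{1}}|_{\Delta} \longrightarrow \O_{\F\times\F}/\cJ^{2} \longrightarrow \O_{\F\times_{\P^{1}}\F}/\cI_{\Delta}^{2} \longrightarrow 0.
\]

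Finally I would tensor with $\pr_{2}^{*}\O_{\F}(C)$ and apply $\pr_{1*}$. All three sheaves are supported on the first-order neighbourhood of $\Delta$, on which $\pr_{1}$ restricts to a finite morphism isomorphic onto $\F$, so $\pr_{1*}$ is exact. The middle and right pushforwards are by definition $J^{3}_{\text{fat}}(\O_{\F}(C))$ and $J^{2}_{\pi}(\O_{\F}(C))$. For the left-hand term, since $\pr_{2}|_{\Delta}=\mathrm{id}_{\F}$, we get $\pr_{1*}\bigl(\pi^{*}\omega_{\P^{1}}|_{\Delta}\otimes \pr_{2}^{*}\O_{\F}(C)\bigr) = \pi^{*}\omega_{\P^{1}}\otimes \O_{\F}(C)$, yielding the sequence \eqref{eq:fatvsrel}.

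The main obstacle is the local identification $\cK\cap\cJ^{2}=\cK\cdot\cJ$; this is what prevents the kernel from acquiring extra $\cJ$-torsion and guarantees that the kernel is a line bundle on $\Delta$ rather than a more complicated sheaf. Once this local calculation is in hand, the rest is formal from the functoriality of jet bundles and exactness of pushforward along the finite map $\pr_{1}|_{\text{thickening of }\Delta}$.
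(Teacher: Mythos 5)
Your proposal is correct and follows essentially the same route as the paper: both identify the kernel of $\O_{\F\times\F}/\cJ_{\Delta}^{2}\twoheadrightarrow \O_{\F\times_{\P^1}\F}/\cI_{\Delta}^{2}$ with the conormal line bundle of $\F\times_{\P^1}\F$ restricted to the diagonal via a local coordinate computation, recognize it as $\pi^{*}\omega_{\P^1}$, and then tensor and push forward. Your equality $\cK\cap\cJ^{2}=\cK\cdot\cJ$ is exactly the algebraic content of the paper's claim that $\cI_{\F\times_{\P^1}\F}/(\cI_{\F\times_{\P^1}\F}\cdot\cJ_{\Delta})\to(\cI_{\F\times_{\P^1}\F}+\cJ_{\Delta}^{2})/\cJ_{\Delta}^{2}$ is an isomorphism, and your explicit remark on the exactness of $\pr_{1*}$ on sheaves finite over $\F$ is a detail the paper leaves implicit.
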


\begin{proof} Consider the closed inclusions $\Delta \subset \F\times_{\P^{1}}\F \subset \F \times \F$. Let $\cI_{\F\times_{\P^{1}}\F}$ be the ideal sheaf of the latter inclusion. Then $\cI_{\F\times_{\P^{1}}\F}$ is invertible, as $\F\times_{\P^{1}}\F$ is a divisor in the smooth variety $\F \times \F$. Keeping with previous notation, let  $\cJ_{\Delta}$ be the ideal sheaf of $\Delta \subset \F \times \F$.  Then consider the sequence 
\begin{equation}\label{seq}
0 \to (\cI_{\F\times_{\P^{1}}\F} + \cJ^{2}_{\Delta})/\cJ^{2}_{\Delta} \to \O_{\F \times \F}/\cJ^{2}_{\Delta} \to \O_{\F \times \F}/(\cI_{\F\times_{\P^{1}}\F} + \cJ^{2}_{\Delta}) \to 0.
\end{equation}

Each sheaf is supported on the diagonal $\Delta$.  We now prove that $(\cI_{\F\times_{\P^{1}}\F} + \cJ^{2}_{\Delta})/\cJ^{2}_{\Delta}$ is isomorphic to the invertible $\O_{\Delta}$-module $\cI_{\F\times_{\P^{1}}\F}/(\cI_{\F\times_{\P^{1}}\F}\cdot \cJ_{\Delta})$.  Note that there is a natural map $f \from \cI_{\F\times_{\P^{1}}\F}/(\cI_{\F\times_{\P^{1}}\F}\cdot \cJ_{\Delta}) \to (\cI_{\F\times_{\P^{1}}\F} + \cJ^{2}_{\Delta})/\cJ^{2}_{\Delta}$. We claim it is an isomorphism. 

For this, it suffices to check this isomorphism affine-locally on $\F$.  Let $\A^{2} \subset \F$ be an open subset with coordinate ring $k[x,y]$, such that the inclusion $k[x] \hookrightarrow k[x,y]$ induces the projection $\pi \from \F \to \P^1$.  Let $p \in \F$ be the origin $(0,0)$. Then, restricted to $p$, the sequence \eqref{seq} becomes the following sequence of $k[x,y]$-modules: 
$$
0 \to (x,y^{2})/(x^{2},xy,y^{2}) \to k[x,y]/(x^{2},xy,y^{2}) \to k[x,y]/(x,y^{2}) \to 0.
$$
The kernel is generated by the residue class $\bar{x} \in (x,y^{2})/(x^{2},xy,y^{2})$.  The residue class $\bar{x}$ is the image of a local generator of the invertible $\O_{\Delta}$-module $\cI_{\F\times_{\P^{1}}\F}/(\cI_{\F\times_{\P^{1}}\F}\cdot \cJ_{\Delta})$ under the map $f$.

It is straightforward to check that the invertible $\O_{\Delta}$-module $\cI_{\F\times_{\P^{1}}\F}/(\cI_{\F\times_{\P^{1}}\F}\cdot \cJ_{\Delta})$ is isomorphic to the line bundle $\pi^{*}(\omega_{\P^{1}})$ when we identify $\F$ with $\Delta$ via projection.  (In terms of the local equations above, the element $\bar{x}$ is a generator of the vector space $\mathfrak{m} / \mathfrak{m}^{2}$ where $\mathfrak{m} \subset \O_{\P^{1},0}$ is the maximal ideal of the point $0$.) With these identifications, and upon tensoring \eqref{seq} by $\O_{\F}(C)$, we obtain \eqref{eq:fatvsrel}.

\end{proof}

Equation \eqref{eq:fatvsrel} implies the existence of a sequence of vector bundles 
\begin{equation}\label{Wn}
0 \to W_{n} \to W \to \pi^{*}(\omega_{\P^1}) \otimes \O_{\F}(C) \to 0.
\end{equation}

\begin{remark}
The bundle $\P W_{n}$ parametrizes pairs $(C, p)$ where $p$ is a singular point of $C$.
\end{remark}

\subsection{The divisor theory of $\cR$}\label{divisorR}

The variety $\P W$ has a projection $$p_{1} \from \P W \to \P^{N} = \P(H^{0}(\F, \O_{\F}(C))),$$ and a second projection $p_{2} \from \P W \to \F$ which exhibits it as a projective bundle over the surface $\F$. Since the rank of the Picard group of $\F$ is two, we get 
$$
A^{1}(\P W) = \Q^{3}.
$$
Let $\P W^{s}_{\text{sm}} \subset \P W$ denote the open subset parametrizing smooth curves $C$ which are simply branched under $\pi \from \F \to \P^1$.  The closed set $\P W \setminus \P W^{s}_{\text{sm}}$ consists of four irreducible divisors, whose generic points have been indicated: 
\begin{enumerate}
\item[$\delta_{0}$:] parametrizes pairs $(C,p)$ where $C$ has a node which is away from $p$.
\item[$\delta_{n}$:] parametrizes pairs $(C,p)$ where $p$ is a node of $C$. It is the subbundle $\P W_{n} \subset \P W$, from \eqref{Wn}.
\item[$\xi_{\text{tr}}$:]  parametrizes pairs $(C,p)$ where $p$ is a point of triple ramification. It is the subbundle $\P W_{\text{tr}} \subset \P W$ from \eqref{Wtr}.
\item[$\tau$:] parametrizes pairs $(C,p)$ where $C$ has a point of triple ramification away from $p$.
\end{enumerate}

If we let $\delta \subset \P^{N}$ denote the discriminant divisor parametrizing singular curves in the linear system $|C|$, one can check that $$p_{1}^{*}\delta = \delta_{0} + 3\delta_{n}.$$ (The coefficient $3$ comes from an analytic computation. This computation is equivalent to the one needed to show that as a plane curve specializes to a nodal curve, each branch of the node is the limit of $3$ flex points.  We will not use the specific coefficient of $\delta_{n}$ in our arguments.)  For simplicity, we suppress the $p_{1}^{*}$ and will refer to $\delta \subset \P W$.  We now prove: 
\begin{proposition}\label{indepW}
The divisor classes $[\delta], [\delta_{n}],$ and $[\xi_{\text{tr}}]$ are independent in $A^{1}(\P W)$. Therefore, $$A^{1}(\P W^{s}_{\text{sm}}) = 0.$$
\end{proposition}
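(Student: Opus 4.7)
The plan is to work in coordinates on $A^{1}(\P W)$ coming from the projective bundle structure $p_{2} \from \P W \to \F$, compute each of the three classes, and reduce independence to nonvanishing of an explicit $3 \times 3$ determinant.

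First, since $p_{2} \from \P W \to \F$ is a projective bundle and $\Pic(\F)_{\Q} = \Q^{2}$, the Chow ring gives
\[
A^{1}(\P W) = \Q\langle \zeta, p_{2}^{*}F, p_{2}^{*}S \rangle,
\]
where $\zeta = c_{1}(\O_{\P W}(1))$ and $F, S$ are the fiber and a section class of $\F$, so indeed $A^{1}(\P W) = \Q^{3}$. Next, using the inclusion $W \hookrightarrow H^{0}(\F, \O_{\F}(C)) \otimes \O_{\F}$ and composing the tautological subbundle $\O_{\P W}(-1) \hookrightarrow p_{2}^{*}W$ with this inclusion, we recover the map $p_{1} \from \P W \to \P^{N}$ and see that $p_{1}^{*}\O_{\P^{N}}(1) = \O_{\P W}(1)$, so $[\delta] = (\deg \Delta)\,\zeta$ where $\Delta \subset \P^{N}$ is the discriminant, a nonzero multiple of $\zeta$ with zero $p_{2}^{*}$-component.

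For the other two classes, I use that a surjection $W \twoheadrightarrow L$ onto a line bundle $L$ cuts out the subbundle as the zero locus of a section of $\O_{\P W}(1) \otimes p_{2}^{*}L$. Applied to \eqref{Wn} and \eqref{Wtr}:
\begin{align*}
[\delta_{n}] &= \zeta + p_{2}^{*}\bigl(c_{1}(\O_{\F}(C)) + c_{1}(\pi^{*}\omega_{\P^{1}})\bigr), \\
[\xi_{\text{tr}}] &= \zeta + p_{2}^{*}\bigl(c_{1}(\O_{\F}(C)) + 2\,c_{1}(\omega_{\pi})\bigr).
\end{align*}
Subtracting an appropriate multiple of $[\delta]$ from each reduces independence to the statement that the two $\Pic(\F)_{\Q}$ classes
\[
\O_{\F}(C) + \pi^{*}\omega_{\P^{1}} \quad \text{and} \quad \O_{\F}(C) + 2\omega_{\pi}
\]
are linearly independent on $\F = \F_{0}$ or $\F_{1}$. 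Their difference is $\pi^{*}\omega_{\P^{1}} - 2\omega_{\pi}$, which in the $F, S$ basis equals $4S - (2+2m)F$ (using $K_{\F_{m}} = -2S + (m-2)F$), and in particular has nonzero $S$-coefficient; meanwhile $\O_{\F}(C) = 3S + aF$ has $S$-coefficient $3$. The resulting $2\times 2$ determinant is an explicit nonzero integer for both $m = 0, 1$, giving independence of $[\delta], [\delta_{n}], [\xi_{\text{tr}}]$ in $A^{1}(\P W)$.

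Finally, for the second assertion: the complement $\P W \setminus \P W^{s}_{\text{sm}}$ is the union of the four divisors $\delta_{0}, \delta_{n}, \xi_{\text{tr}}, \tau$. Using $[\delta] = [\delta_{0}] + 2[\delta_{n}]$, the classes $[\delta_{0}], [\delta_{n}], [\xi_{\text{tr}}]$ span the same subspace of $A^{1}(\P W)$ as $[\delta], [\delta_{n}], [\xi_{\text{tr}}]$, which is all of $\Q^{3}$. By the excision exact sequence for Chow groups with $\Q$-coefficients, $A^{1}(\P W^{s}_{\text{sm}})$ is the quotient of $A^{1}(\P W)$ by this subspace, hence zero. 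The main obstacle is really just bookkeeping the Picard theoretic computation on $\F$ correctly; once the classes of $\delta_{n}$ and $\xi_{\text{tr}}$ are expressed via \eqref{Wn} and \eqref{Wtr}, independence reduces to a finite numerical check.
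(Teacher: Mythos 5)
Your proof is correct and follows essentially the same route as the paper's: $[\delta]$ is a multiple of the hyperplane class $\zeta = p_1^*\O_{\P^N}(1)$, the sequences \eqref{Wn} and \eqref{Wtr} give $[\delta_n] = \zeta + p_2^*(C + \pi^*\omega_{\P^1})$ and $[\xi_{\text{tr}}] = \zeta + p_2^*(C + 2\omega_{\pi})$, and independence reduces to non-proportionality of these two classes in $\Pic(\F)\otimes\Q$. You are in fact slightly more explicit than the paper in carrying out the final determinant check on $\F_0$, $\F_1$ and the excision step yielding $A^1(\P W^{s}_{\text{sm}}) = 0$.
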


\begin{proof}
The natural projection $p_{1} \from \P W \to |C| = \P^{N}$ induces a hyperplane class $$h \in A^{1}(\P W).$$ Furthermore, since there is a natural inclusion $W \subset k^{N+1} \otimes \O_{\F}$ inducing $p_{1}$, we see that $h$ represents the natural $\O_{\P W}(1)$ on the projective bundle $\P W$.  (In other words, the sections of the line bundle $\O_{\P W}(h)$ are given by $H^{0}(\F, W^{*})$.)  The total boundary $\delta$ is pulled back from the projection $p_{1}$, and therefore its divisor class is a multiple of $h$.  

The divisors $\delta_{n}$ and $\xi_{\text{tr}}$ are the subbundles $\P W_{n}$ and $\P W_{\text{tr}}$, respectively.   By considering the exact sequences \eqref{Wtr} and \eqref{Wn} which give rise to these subbundles, we get 
\begin{eqnarray*}
\delta_{\text{tr}} &=& h + p_{2}^{*}(C + 2\omega_{\pi})\\
\xi_{\text{tr}} &=& h + p_{2}^{*}(C + \pi^{*}\omega_{\P^1}).
\end{eqnarray*}
Therefore, in order for $\delta$, $\delta_{n}$, and $\xi_{\text{tr}}$ to be dependent, we would need the divisor class $C + 2\omega_{\pi}$ to be a multiple of $C + \pi^{*}\omega_{\P^1}$. This is not the case for our trigonal genus $g$ divisor classes $C$.  
\end{proof}

Divisors $\xi_{\text{tr}}$ and $\tau$ descend to give divisors on $\cR$ (by taking closures), and we will use the same notation to refer to these. (Note that $\xi_{\text{tr}}$ is the branch divisor of the finite, degree $2g+4$ forgetful map $\phi \from \cR \to \H$.)  In what follows, we let $\Rs_{1} \subset \cR$ denote $\phi^{-1}(\H^{s})$. 

 \begin{corollary}\label{divR}
 The Chow group $A^{1}(\Rs_{1})$ is trivial. The Chow group $A^{1}(\cR)$ is generated by $\kappa_{1}$ and $\xi_{\text{tr}}$.
 \end{corollary}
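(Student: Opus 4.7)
The plan is to apply Vistoli's theorem~\ref{vistoli} to the quotient presentation of $\cR^{\circ}$ from Section~\ref{section:quotient-R} and combine it with Proposition~\ref{indepW}, then extend by excision.

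First I express $\cR^{\circ}$ as the geometric quotient by $G = \Aut \F$ of the open subset $\P W_{\text{sm}} := \P W \setminus (\delta_{0} \cup \delta_{n})$ parametrizing those $(C,p)$ with $C$ smooth. Extending Vistoli's theorem to $G$ (a minor variant of the $SL_{n}$/$PGL_{n}$ case for rational coefficients), the pullback $q^{*} \from A^{1}(\cR^{\circ}) \to A^{1}(\P W_{\text{sm}})$ is an isomorphism: surjectivity is part (i), and the kernel in part (ii) is generated by first Chern classes of the canonical bundles, which vanish rationally. By Proposition~\ref{indepW}, the classes $[\delta], [\delta_{n}], [\xi_{\text{tr}}]$ form a basis of $A^{1}(\P W) = \Q^{3}$; excising $\delta_{0}$ and $\delta_{n}$ and using $\delta = \delta_{0} + 2\delta_{n}$ leaves $A^{1}(\P W_{\text{sm}}) = \Q \cdot [\xi_{\text{tr}}]$, so $A^{1}(\cR^{\circ}) = \Q \cdot [\xi_{\text{tr}}]$. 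To pass to $A^{1}(\cR)$, observe that the complement $\cR \setminus \cR^{\circ} = \phi^{-1}(\cN_{2})$ has codimension $\geq 2$ when $g$ is odd and is a divisor pulled back from the Maroni divisor in $\H$ (a multiple of $\kappa_{1}$ by Stankova's Theorem~\ref{divH}) when $g$ is even; excision then shows $A^{1}(\cR)$ is generated by $\kappa_{1}$ and $\xi_{\text{tr}}$.

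For the triviality claim, the same argument restricts to the simply branched locus: $(\Rs)^{\circ} := \Rs \cap \cR^{\circ}$ is the $G$-quotient of $\P W^{s}_{\text{sm}}$, and Proposition~\ref{indepW} directly gives $A^{1}(\P W^{s}_{\text{sm}}) = 0$, so Vistoli forces $A^{1}((\Rs)^{\circ}) = 0$. The complement $\Rs \setminus (\Rs)^{\circ}$ has codimension $\geq 2$ when $g$ is odd, and for even $g$ is a divisor pulled back from the Maroni locus inside $\Hs$. The main subtle point is then to check that this residual divisor class vanishes: since $\Hs$ is obtained from $\H$ by removing the caustic $\cT_{1}$ (together with the Maxwell divisor), and $[\cT_{1}]$ is a nonzero multiple of $\kappa_{1}$ while $A^{1}(\H) = \Q \kappa_{1}$ by Stankova, we conclude $\kappa_{1} = 0$ in $A^{1}(\Hs)$ and hence also in $A^{1}(\Rs)$. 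Excision therefore yields $A^{1}(\Rs) = 0$ in all cases.
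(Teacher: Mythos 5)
Your overall strategy --- present $\cR^{\circ}$ and $(\Rs)^{\circ}$ as quotients of open subsets of $\P W$ by $G = \Aut \F$, feed \autoref{indepW} into Vistoli's theorem, and handle the Maroni locus by excision --- is exactly the paper's, and your treatment of the complementary loci (including the observation that $\kappa_{1}$ dies on $\Hs$ because $[\cT_{1}]$ is a nonzero multiple of it) is fine. But there is a genuine gap at the central step: you assert that the degree-one part of the Vistoli kernel for the $\Aut\F$-quotient is ``generated by first Chern classes of the canonical bundles, which vanish rationally,'' and conclude that $q^{*}$ is injective on $A^{1}$. That is correct when $g$ is even, since the identity component of $\Aut\F_{0}$ is $PGL_{2}\times PGL_{2}$, which is semisimple, has no nontrivial characters, and lifts to $SL_{2}\times SL_{2}$ where the canonical bundles have trivial $c_{1}$. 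It fails as stated when $g$ is odd: $\Aut\F_{1}$ is an extension of $GL_{2}$ by a unipotent group, and $GL_{2}$ carries the nontrivial character $\det$, so the quotient comes with a canonical line bundle $N$ whose first Chern class generates the degree-one part of the kernel and has no a priori reason to vanish. In particular ``Vistoli forces $A^{1}((\Rs)^{\circ}) = 0$'' does not follow from $A^{1}(\P W^{s}_{\text{sm}}) = 0$ alone; a priori $A^{1}((\Rs)^{\circ})$ could be $\Q\, c_{1}(N)$.

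The repair is the one the paper uses, and you already have every ingredient on the table: the projection $p_{1}\from \P W^{s}_{\text{sm}} \to (\P^{N})^{s}_{\text{sm}}$ is $G$-equivariant, so the line bundle $N$ on $(\Rs)^{\circ}$ is pulled back along $\phi$ from the analogous canonical line bundle on the quotient of $(\P^{N})^{s}_{\text{sm}}$, which lives on (an open subset of) $\Hs$; since $A^{1}(\Hs) = 0$ by \autoref{divH} together with the fact that $[\cT_{1}]$ is a nonzero multiple of $\kappa_{1}$ --- the very computation you carry out in your final paragraph for a different purpose --- we get $c_{1}(N) = 0$ and the kernel vanishes. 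For the second statement of \autoref{divR} the gap is harmless, since by the same equivariance $c_{1}(N)$ is a multiple of $\kappa_{1}$ on $\cR^{\circ}$ and $\kappa_{1}$ is already among your generators; but for the claim $A^{1}(\Rs) = 0$ the equivariance argument is genuinely needed in the odd-genus case.
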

\begin{proof}

We consider the fiber square
\begin{equation} \label{sq0}
\xymatrix @C -.6pc  {
\P W^{s}_{\text{sm}} \ar[rr]^{p_{1}} \ar[d]_{q_{\Rs_{1}}}&&
 (\P^{N})^{s}_{\text{sm}}  \ar[d]_{q}  \\
 (\Rs_{1})^{\circ} \ar[rr]^{\phi} && (\Hs)}
 \end{equation} 
 (Here $(\P^{N})^{s}_{\text{sm}}$ parameterizes the smooth and simply branched curves in the linear system $|C|$.) The map $p_{1}$ is obviously $G$-equivariant where $G = \Aut \F$, which implies the following equality in $A^{1}(\cR_{1}^{s \circ})$: $$\ker q_{\Rs_{1}}^{*} = \phi^{*}(\ker q^{*}) = 0.$$ (The second equality follows from \autoref{divH}.) Therefore, by Vistoli's theorem we conclude that $$ A^{1}((\Rs_{1})^{\circ}) = 0.$$  
 
 Now we recall that when $\cR \setminus \cR^{\circ}$ is a divisor (the Maroni divisor), it is a positive multiple of $\kappa_{1}$.   
 We conclude by noting that $\cR \setminus \Rs$ is also a positive multiple of $\kappa_{1}$ and contains two irreducible components: $\xi_{\text{tr}}$, and $\tau$.

\end{proof}

\begin{corollary}\label{supp}
Given any two line bundles $L_{1}, L_{2}$ on $\cR$, the class $\phi_{*}(c_{1}(L_{1}) \cdot c_{1}(L_{2})) \in A^{2}(\H)$ can be represented by a divisor class in  $\cT_{1} \subset \H$.
\end{corollary}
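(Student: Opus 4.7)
The plan is to use \autoref{divR} to reduce the computation to three model intersections, and then verify each one lies in the image of $i_{*} \from A^{1}(\cT_{1}) \to A^{2}(\H)$, where $i \from \cT_{1} \hookrightarrow \H$ is the inclusion.

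First, I would invoke \autoref{divR} to write
$$c_{1}(L_{j}) = a_{j}\, \phi^{*}\kappa_{1} + b_{j}\, \xi_{\text{tr}}$$
in $A^{1}(\cR)$ for some $a_{j}, b_{j} \in \Q$ ($j=1,2$). Expanding the product bilinearly and applying the projection formula, $\phi_{*}(c_{1}(L_{1}) \cdot c_{1}(L_{2}))$ becomes a $\Q$-linear combination of just three terms: $\kappa_{1} \cdot \phi_{*}(\phi^{*}\kappa_{1})$, $\kappa_{1} \cdot \phi_{*}(\xi_{\text{tr}})$, and $\phi_{*}(\xi_{\text{tr}}^{2})$.

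Next, I would handle each term in turn. The first equals $(2g+4)\,\kappa_{1}^{2}$ since $\phi$ is finite of degree $2g+4$; because $[\cT_{1}]$ is a nonzero rational multiple of $\kappa_{1}$ (recorded in the introduction), we get $\kappa_{1}^{2} = c \cdot \kappa_{1} \cdot [\cT_{1}] = i_{*}(c\, \kappa_{1}|_{\cT_{1}})$, which is visibly a divisor class on $\cT_{1}$. For the second, the map $\phi$ sends $\xi_{\text{tr}}$ surjectively onto $\cT_{1}$, so $\phi_{*}(\xi_{\text{tr}})$ is a positive multiple of $[\cT_{1}]$; multiplying by $\kappa_{1}$ once more produces $i_{*}$ of a divisor class on $\cT_{1}$. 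For the third, I would apply the self-intersection formula: if $j \from \xi_{\text{tr}} \hookrightarrow \cR$ is the inclusion with normal bundle $N$, then $\xi_{\text{tr}}^{2} = j_{*}(c_{1}(N))$, so $\phi_{*}(\xi_{\text{tr}}^{2}) = (\phi \circ j)_{*}(c_{1}(N))$ is supported on $\phi(\xi_{\text{tr}}) = \cT_{1}$ and therefore equals $i_{*}$ of a divisor class on $\cT_{1}$.

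There is no substantial obstacle here; given the two-generator description of $A^{1}(\cR)$ from \autoref{divR}, the result is essentially formal. The only point requiring care is a bookkeeping one: each generator of $A^{1}(\cR)$ behaves well under $\phi_{*}$ because $\phi^{*}\kappa_{1}$ pushes forward to a multiple of $\kappa_{1}$ (which itself is a multiple of $[\cT_{1}]$), and $\xi_{\text{tr}}$ already lives over $\cT_{1}$. Both features force the resulting class in $A^{2}(\H)$ to be supported on the divisor $\cT_{1}$.
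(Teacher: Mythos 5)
Your proof is correct and takes essentially the same approach as the paper: the paper's one-line argument is that, by \autoref{divR}, every divisor class on $\cR$ is supported on $\phi^{-1}(\cT_{1})$, so the product of any two pushes forward to a class supported on $\cT_{1}$. Your term-by-term expansion in the generators $\phi^{*}\kappa_{1}$ and $\xi_{\text{tr}}$ (using that $[\cT_{1}]$ is a nonzero multiple of $\kappa_{1}$ and that $\xi_{\text{tr}}$ lies over $\cT_{1}$) is just an explicit unwinding of that same observation.
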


\begin{proof}
This follows from \autoref{divR}. Indeed, all divisor classes on $\cR$ are supported on the locus $\phi^{-1}(\cT_{1}) \subset \cR$.
\end{proof}

\begin{corollary}\label{suppV}
The Chern class $c_{2}(V) \in A^{2}(\H)$ can be represented by a divisor class in $\cT_{1}$.
\end{corollary}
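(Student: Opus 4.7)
The plan is to realize $c_{2}(V)$ as a pushforward of the form $\phi_{*}(c_{1}(L_{1}) \cdot c_{1}(L_{2}))$ for suitable line bundles on $\cR$; \autoref{supp} will then immediately yield the result. The natural candidate is $L_{1} = L_{2} = M$, where $M := (\alpha|_{\cR})^{*}\O_{\cP}(1)$ is the pullback to $\cR$ of the tautological bundle on $\cP$.

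Setting $h = c_{1}(\O_{\cP}(1)) \in A^{1}(\cP)$, two applications of the projection formula (first for the inclusion $\cR \hookrightarrow \cC$, then for $\alpha \from \cC \to \cP$) should reduce the computation to
$$
\phi_{*}\!\left(c_{1}(M)^{2}\right) \;=\; \pi_{*}\!\left(h^{2} \cdot \alpha_{*}[\cR]\right).
$$
I expect Riemann--Hurwitz, read as an identity of cycle classes on $\cP$, to give $\alpha_{*}[\cR] = [B]$, where $B$ is the branch divisor of $\alpha$. The defining relation $h^{2} + c_{1}(V)\,h + c_{2}(V) = 0$ in $A^{*}(\cP)$, together with the normalization $c_{1}(V) = 0$, then gives $h^{2} = -\pi^{*} c_{2}(V)$. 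Since $B$ has relative degree $2g+4$ over $\H$, one further application of the projection formula yields
$$
\phi_{*}\!\left(c_{1}(M)^{2}\right) \;=\; \pi_{*}\!\left(-\pi^{*} c_{2}(V) \cdot [B]\right) \;=\; -c_{2}(V)\cdot \pi_{*}[B] \;=\; -(2g+4)\, c_{2}(V).
$$
Since $2g+4 \neq 0$, \autoref{supp} applied to the left-hand side then represents $c_{2}(V)$ as a divisor class in $\cT_{1}$.

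The main obstacle will be justifying carefully the identification $\alpha_{*}[\cR] = [B]$ in $A^{1}(\cP)$. Over the simply branched locus $\Hs$ this is transparent, as $\alpha|_{\cR}$ restricts to an isomorphism onto $B$. Over the triple ramification locus $\cT_{1}$ a short local computation at the model $s = t^{3}$ is needed: it shows that $\cR = \{d\alpha = 0\}$ picks up scheme multiplicity $2$ at the triple ramification point, while $B$ (cut out by the discriminant of $\alpha_{*}\O_{\cC}$) likewise picks up multiplicity $2$ at the image, so the cycle-theoretic pushforwards agree there too. This scheme-theoretic bookkeeping is the only delicate ingredient; once it is in hand, the remainder of the argument is formal Chern-class manipulation.
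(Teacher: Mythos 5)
Your proposal is correct and is essentially the paper's own argument: the paper pulls $V$ back to $\cR$, uses the section of $\phi^{*}\cP$ given by the graph of $\alpha|_{\cR}$ to produce $0 \to L_{1} \to \phi^{*}(V) \to L_{2} \to 0$ (with $L_{2} = M$ and $c_{1}(L_{1}) = -c_{1}(M)$ in your notation, so $\phi^{*}c_{2}(V) = -c_{1}(M)^{2}$), and then pushes forward and invokes \autoref{supp}, exactly as you do. The one step you flag as delicate, the identification $\alpha_{*}[\cR] = [B]$, can be bypassed entirely: after substituting $h^{2} = -\pi^{*}c_{2}(V)$, the projection formula gives $\pi_{*}\left(h^{2}\cdot \alpha_{*}[\cR]\right) = -c_{2}(V)\cdot \phi_{*}[\cR] = -(2g+4)\,c_{2}(V)$ directly (any discrepancy between $\alpha_{*}[\cR]$ and $[B]$ is supported over $\cT_{1}$ and is killed by $\pi_{*}$ in any case), so no multiplicity bookkeeping at triple ramification points is needed.
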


\begin{proof}
Consider the pullback $\phi^{*}(V)$ to $\cR$.  Recall that $\cP = \P V$. The projective bundle $\phi^{*}(\cP)$ over $\cR$ has a section $\sigma$. This section is induced from the graph of the natural map $$\alpha \from \cR \to \cP.$$ 

The section $\sigma$, in turn, induces an exact sequence $$0 \to L_{1} \to \phi^{*}(V) \to L_{2} \to 0$$ for some line bundles $L_{1}$ and $L_2$ on $\cR$.  The corollary now follows from the push pull formula, and from \autoref{supp}.
\end{proof}

\section{The universal triple ramification point $\tT_{1}$}\label{section:univ-trip-ram-point}

Let $\cT_{1} \subset \H$ be the  closed, irreducible, codimension $1$ substack parametrizing covers $\alpha \from C \to \P^1$ with non-simple ramification. $\cT_{1}$ is singular precisely along the locus $\cT_{2} \subset \cT_{1}$ parametrizing covers with at least two non-simple ramification points.

We can normalize $\cT_{1}$ by introducing the space $\tT_{1}$ of triple covers with a {\sl marked} point of non-simple ramification: 

$$\tT_{1} := \{(\alpha \from C \to \P^1, p \in C) \mid \text{$p$ is a point of non-simple ramification}\}.$$

There is a natural forgetful map $$\varphi \from \tT_{1} \to \H$$ which serves as the normalization of $\cT_{1}$ at the level of coarse spaces.  Furthermore, given that we are using ${\bf Q}$ coefficients, it is clear that $$\varphi_{*}(A^{1}(\tT_{1}) )= A^{1}(\cT_{1}).$$

The main result of this section is: 
\begin{theorem}\label{divT}
$A^{1}(\tT_{1})$ is generated by $\kappa_{1}$.
\end{theorem}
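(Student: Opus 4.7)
The plan is to mirror the proof of \autoref{divR}: realize a dense open substack of $\tT_{1}$ as a geometric quotient of an open subset of the projective bundle $\P W_{\text{tr}} \to \F$ by the action of $G = \Aut\F$, then combine an analogue of \autoref{indepW} with Vistoli's theorem \autoref{vistoli} to bound $A^{1}$, and finally identify the remaining boundary components with multiples of $\kappa_{1}$. Recall from \eqref{Wtr} that points of $\P W_{\text{tr}}$ parametrize pairs $(C, p)$ with $p$ a triple ramification point of the projection $C \to \P^{1}$, so that the action of $\Aut\F$ realizes the quotient as the minimal-Maroni open substack $\tT_{1}^{\circ} \subset \tT_{1}$.

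First I would compute $A^{1}(\P W_{\text{tr}})$. Since $\P W_{\text{tr}} \to \F$ is a projective bundle, one has $A^{1}(\P W_{\text{tr}}) = \Q^{3}$, generated by the hyperplane class $h'$ of $\O_{\P W_{\text{tr}}}(1)$ together with the two generators of $\Pic\,\F$ pulled back from $\F$. I would then enumerate the boundary divisors cutting out the ``good'' locus $U \subset \P W_{\text{tr}}$ of pairs $(C, p)$ with $C$ smooth and $\alpha$ simply branched away from $p$: the pullback of the discriminant $\delta$ from $\P^{N}$; the intersection $\P W_{n} \cap \P W_{\text{tr}}$ where $p$ is a node of $C$; the locus where some other point of $C$ is a triple ramification; and the Maxwell-type locus where two additional ramifications of $\alpha$ coincide over a common branch point. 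Using the exact sequences \eqref{Wtr} and \eqref{Wn} together with the projective-bundle structure, their classes can be expressed explicitly in terms of $h'$ and the pullback classes from $\F$. I expect, analogously to \autoref{indepW}, that these classes together with the Maroni boundary generate $A^{1}(\P W_{\text{tr}})$, so that $A^{1}(U) = 0$.

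Next, I would apply Vistoli's theorem \autoref{vistoli} to the quotient $U \to (\tT_{1}^{s})^{\circ}$, where $(\tT_{1}^{s})^{\circ}$ denotes the open substack of $\tT_{1}^{\circ}$ on which $\alpha$ is simply branched away from $p$. Precisely as in the proof of \autoref{divR}, the surjectivity of $q^{*}$ combined with the triviality of $A^{1}(U)$ shows that $A^{1}((\tT_{1}^{s})^{\circ}) = 0$. The main obstacle, and the last step, is to verify that every divisorial component of $\tT_{1} \setminus (\tT_{1}^{s})^{\circ}$ is a rational multiple of $\kappa_{1}$. The Maroni component (nonempty in codimension one only when $g$ is even) pulls back from $\H$ and is a multiple of $\kappa_{1}$ by \autoref{divH}. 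The branching components -- the preimage of $\cT_{2} \subset \cT_{1}$ (covers carrying a second triple ramification point) and the preimage of $\cT_{1} \cap \text{Maxwell}$ (where a pair of simple ramifications of $\alpha$ collides over the same branch point as a separate ramification) -- must likewise be identified as multiples of $\kappa_{1}$. I would approach this either by tracking explicit divisor-class formulas from $\P W_{\text{tr}}$ through the quotient and taking closures, or by exploiting the identification $\tT_{1} = \xi_{\text{tr}} \subset \cR$ together with \autoref{divR} to express the resulting normal-bundle class $N_{\tT_{1}/\cR}$ as a rational multiple of $\kappa_{1}$.
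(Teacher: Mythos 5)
Your overall architecture (quotient presentation of the minimal-Maroni locus, Vistoli's theorem, independence of boundary divisor classes in a projective bundle over $\F$, then excision) is the same as the paper's, and your identification of the relevant bundle $\P W_{\text{tr}}$ (the paper's $\P U$) is correct. But there is a genuine gap in the final step, and it is not a detail: you cut out the larger boundary consisting of singular curves \emph{together with} the locus where $C$ has a second triple ramification point and the Maxwell-type locus. After proving $A^{1}$ of the small open set vanishes, excision forces you to show that the classes of $\varphi^{-1}(\cT_{2})$ and of the Maxwell-type divisor are multiples of $\kappa_{1}$ in $A^{1}(\tT_{1})$ --- and this is essentially equivalent to the theorem itself, since together with the Maroni locus these are exactly the classes that could obstruct the statement. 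Neither of your two suggested remedies closes this: these loci are not projective subbundles of $\P W_{\text{tr}}$, so the sequences \eqref{Wtr} and \eqref{Wn} give no formula for their classes; and restricting \autoref{divR} from $\cR$ to the divisor $\xi_{\text{tr}} = \tT_{1}$ does not control $A^{1}(\tT_{1})$, because divisor classes on a divisor need not extend to the ambient space.

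The paper avoids this circle by removing \emph{only} the discriminant of singular curves, and observing that this discriminant already has three irreducible components --- $\delta_{0}$ (node away from $p$), $\delta_{\text{red}}$ (a ruling line splits off; you omit this one), and $\delta_{\text{ram}}$ ($p$ is a ramified node) --- whose classes are shown to be independent in $A^{1}(\P U) = \Q^{3}$ (\autoref{indep}). Hence $A^{1}(\P U_{\text{sm}}) = 0$ outright, the quotient of $\P U_{\text{sm}}$ is all of $\tT_{1}^{\circ}$, and the only excision left is the Maroni locus, already a multiple of $\kappa_{1}$. The loci you proposed to remove simply stay inside $\P U_{\text{sm}}$ and never need to be accounted for. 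Note also that the computations needed for \autoref{indep} go beyond \eqref{Wtr} and \eqref{Wn}: the class of $\delta_{\text{red}}$ comes from the fourth-order relative jet bundle via \eqref{reljetseq}, and the class of $\delta_{\text{ram}}$ requires the new ``mixed'' jet bundle $J^{4}_{\text{ram}}$ built from the ideal $(\cI_{\Delta}^{3} + \cI_{\F \times_{\P^1} \F}) \cap \cI_{\Delta}^{2}$, giving the sequence \eqref{J4}. To repair your argument, either adopt the paper's choice of open set, or supply independent computations of the classes of $\varphi^{-1}(\cT_{2})$ and the Maxwell locus in $A^{1}(\P U)$ and of their behavior under descent.
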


 The proof of \autoref{divT} parallels the proof of \autoref{divR}. We study the open set $$\tT_{1}^{\circ} \subset \tT_{1}$$ consisting of covers with minimal Maroni invariant by exhibiting it as a quotient of a space which is easily understood.
 
 The complement, $\tT_{1} \setminus \tT_{1}^{\circ}$ is a divisor only when $g$ is even, in which case it is the pullback of the Maroni divisor in $\H$, and is therefore already known to be a multiple of $\kappa_{1}$. So it suffices to show that the divisor classes on $\tT_{1}^{\circ}$ are multiples of $\kappa_{1}$.



\subsection{Expressing $\tT_{1}^{\circ}$ as a quotient}\label{section:quotient-triple-ram}

 Fix a balanced Hirzebruch surface $\F,$ (either $\F_{0}$ or $\F_{1}$) and let $L$ denote the ruling line class of the  projection $\pi \from \F \to \P^1$. Consider the vector bundle $U$ on $\F$ whose fiber at a point $p \in \F$ is $$U|_{p} = H^{0}(\F, \O_{\F}(C) \otimes \cI_{3p}).$$ As before, we let ``$np$'' denotes the scheme $\Spec k[\epsilon]/(\epsilon^{n})$ with closed point $p$ and lying in the ruling line through $p \in \F$.  $C$ will denote a smooth genus $g$ trigonal curve in $\F$.  

The projectivization $\P U$ maps to the linear system $|\O_{\F}(C)| \simeq \P^{N} $, and is birational onto the locus $Z \subset \P^{N}$ defined generically as 
$$Z := \{ C \mid \text{$C$ possesses a triple ramification point}\}.$$
 Let $p_{1} \from \P U \to \P^{N}$ denote the natural projection. The group $G: = \Aut \, \F$ acts on the linear system $|\O_{\F}(C)|$ and leaves the locus $Z$ invariant.

Let $ \P U_{\text{sm}} \subset \P U$ denote the open set parametrizing smooth curves.  We let $Z_{\text{sm}} \subset Z$ and $\P^{N}_{\text{sm}} \subset \P^{N}$ denote the respective open subsets parametrizing smooth curves. Since $\P U_{\text{sm}}$ is smooth, and since $p_{1}: \P U_{\text{sm}} \to Z_{\text{sm}}$ is finite and birational, it follows that $\P U_{\text{sm}}$ is the normalization of $Z_{\text{sm}}$, and therefore the $G$-action on $Z_{\text{sm}}$ naturally lifts to a $G$-action on  $\P U_{\text{sm}}$. 

In complete analogy with \eqref{sq0}, we have a Cartesian square of spaces:
$$
\xymatrix @C -.6pc  {
\P U_{\text{sm}} \ar[rr]^{p_{1}} \ar[d]_{q_{\tT_{1}}}&&
 \P^{N}_{\text{sm}}  \ar[d]_{q}  \\
 \tT_{1}^{\circ} \ar[rr]^{\phi} && (\H)^{\circ}}
$$
 which expresses $\tT_{1}^{\circ}$ as quotient of $\P U_{\text{sm}}$ by the group $G = \Aut \F$. Clearly $p_{1}$ is $G$-equivariant.

\subsection{The discriminant $\delta \subset \P U$}

$\P U$ has its natural projection $p_{2}$ to $\F$, expressing it as a projective bundle.  Therefore, $$A^{1}(\P U) = \Q^{3}.$$  Of course, $\P U$ can also be described as 
$$\P U := \left\{(C,p) \in \P^{N} \times \F \mid \text{$p$ is a point of triple ramification for the projection $\pi \from C \to \P^1$}\right\}.$$

The discriminant hypersurface $\delta \subset \P U$, pulled back from $\P^{N}$ via $p_{1}$, parametrizes singular curves, and breaks up into three irreducible components: 
 
\begin{enumerate}
\item[$\delta_{\text{red}}$:] This is the locus of {\sl reducible curves}, where a ruling line of $\F$ ``splits off.''  Generically, these are nodal curves $C' \cup L$ where $L$ is a ruling line of $\F$, and $C'$ is a smooth residual trigonal curve, with the marked point $p$ lying on a general point of $L$. This divisor is contracted under the projection $p_{1} \from \P U\to \P^{N}$. (The position of the point $p$ on $L$ is forgotten.)
\medskip
\item[$\delta_{\text{ram}}$:] This generically parametrizes a nodal curve with one branch of the node being tangent to a ruling line. We say such a curve possesses a {\sl ramified node}. The marked point $p$ is the ramified node.
\medskip
\item[$\delta_{0}$:] This generically parametrizes nodal curves whose nodes do not possess branches that are tangent to any ruling line. The marked point $p$ is a smooth triple ramification point of $C$.
\end{enumerate}

\begin{proposition}\label{indep}
The divisor classes $\delta_{0}$, $\delta_{\text{red}}$, and $\delta_{\text{ram}}$ are $\Q$-linearly independent in $A^{1}( \P U)$. 
\end{proposition}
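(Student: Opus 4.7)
The argument parallels the proof of \autoref{indepW}. Since $p_{2}\from \P U \to \F$ is a projective bundle, $A^{1}(\P U) = \Q\langle h \rangle \oplus p_{2}^{*}A^{1}(\F) \cong \Q^{3}$, where $h = c_{1}(\O_{\P U}(1))$. My plan is to express each of $[\delta_{\text{red}}]$, $[\delta_{\text{ram}}]$, and $[\delta_{0}]$ in this basis and read off independence.

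First I would identify $\delta_{\text{red}}$ and $\delta_{\text{ram}}$ as projectivizations of sub-bundles $U_{\text{red}}, U_{\text{ram}} \subset U$ with line-bundle quotients $Q_{1} := U/U_{\text{red}}$ and $Q_{2} := U/U_{\text{ram}}$. The divisor $\delta_{\text{red}}$ parametrizes sections vanishing on the whole ruling line $L_{p}$, so $U_{\text{red}}|_{p} = H^{0}(\F, \O(C-L_{p}))$; applying the snake lemma to $0 \to U_{\text{red}} \to H^{0}(\O(C)) \otimes \O \to \pi^{*}\pi_{*}\O(C) \to 0$ against the defining sequence of $U$ identifies $Q_{1}$ with $\ker(\pi^{*}\pi_{*}\O(C) \to J^{3}_{\pi}(\O(C)))$, which has rank one. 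For $\delta_{\text{ram}}$: in local coordinates $(x,y)$ on $\F$ with $L_{p}$ given by $x=0$, the containment $3p \subset C$ already imposes $f(p) = f_{y}(p) = f_{yy}(p) = 0$, so singularity of $C$ at $p$ reduces to the single extra condition $f_{x}(p)=0$; by \eqref{eq:fatvsrel}, this normal-derivative evaluation takes values in the line bundle $Q_{2} = \pi^{*}\omega_{\P^{1}} \otimes \O_{\F}(C)$.

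A projective sub-bundle cut out by a rank-one quotient $L$ has class $h + p_{2}^{*}c_{1}(L)$, so $[\delta_{\text{red}}] = h + p_{2}^{*}c_{1}(Q_{1})$ and $[\delta_{\text{ram}}] = h + p_{2}^{*}c_{1}(Q_{2})$. Since $p_{1}^{*}\O_{\P^{N}}(1) = \O_{\P U}(1)$, we have $p_{1}^{*}[\Delta] = (\deg \Delta)\,h$, which decomposes as
$$
[\delta_{0}] + m_{r}[\delta_{\text{red}}] + m_{a}[\delta_{\text{ram}}] \;=\; (\deg \Delta)\,h
$$
for some multiplicities $m_{r}, m_{a} > 0$. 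A direct computation, using the filtration \eqref{filter} and the splitting of $\pi_{*}\O(C)$ as a sum of line bundles on $\P^{1}$, gives on $\F_{m}$ (with $C = 3E + \tfrac{g+2+3m}{2}F$)
\begin{align*}
c_{1}(Q_{1}) &= -3E + \tfrac{g+2-3m}{2}\,F,\\
c_{1}(Q_{2}) &= \phantom{-}3E + \tfrac{g-2+3m}{2}\,F,
\end{align*}
whose $2\times 2$ determinant in $\Pic(\F_{m})$ is $-3g$, nonzero for $g \geq 1$. Hence $\{h,\, p_{2}^{*}c_{1}(Q_{1}),\, p_{2}^{*}c_{1}(Q_{2})\}$ is a basis of $A^{1}(\P U)$, and expanding any hypothetical relation $\alpha[\delta_{0}] + \beta[\delta_{\text{red}}] + \gamma[\delta_{\text{ram}}] = 0$ in this basis forces $\beta = \alpha m_{r}$, $\gamma = \alpha m_{a}$, and $\alpha \deg \Delta = 0$, so $\alpha = \beta = \gamma = 0$.

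The main technical hurdle is the identification of $Q_{2}$: verifying that $\delta_{\text{ram}}$ really is a projective sub-bundle of $\P U$ cut out by a single vector-bundle condition, and that the relevant line bundle is $\pi^{*}\omega_{\P^{1}} \otimes \O(C)$. The cleanest route is through \autoref{fatvsrel}: its sequence \eqref{eq:fatvsrel} splits off exactly $\pi^{*}\omega_{\P^{1}} \otimes \O(C)$ from the fat $3$-jet bundle $J_{\text{fat}}^{3}(\O(C))$ and packages the $f_{x}(p)$ evaluation as a section of this line bundle, so the sub-bundle structure and the value of $c_{1}(Q_{2})$ follow simultaneously.
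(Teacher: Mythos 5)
Your proof is correct and follows essentially the same route as the paper: realize $\delta_{\text{red}}$ and $\delta_{\text{ram}}$ as projective subbundles of $\P U$ cut out by line-bundle quotients coming from jet-bundle sequences, write their classes as $h + p_{2}^{*}(\cdot)$, use that the total discriminant is a positive combination of the three components and is a multiple of $h$, and check that the two classes in $\Pic(\F)\otimes\Q$ are independent. Your explicit values $c_{1}(Q_{1}) = C + 3\omega_{\pi}$ and $c_{1}(Q_{2}) = C + \pi^{*}\omega_{\P^{1}}$ (and the determinant $-3g \neq 0$) agree with the paper's computation, the only cosmetic difference being that the paper packages the identification of $Q_{2}$ via a bespoke rank-$4$ jet bundle $J^{4}_{\text{ram}}$ rather than the rank-$3$ sequence \eqref{eq:fatvsrel}.
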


We postpone the proof of \autoref{indep} until \autoref{deltaram}. For now, we make the following observation: Since the total discriminant $\delta$ is a linear combination of $\delta_{0}$, $\delta_{\text{ram}}$, and $\delta_{\text{red}}$ with all coefficients nonzero, it suffices to show that $\delta, \delta_{\text{red}},$ and $\delta_{\text{ram}}$ are linearly independent. 

As in \autoref{divisorR}, we let $h$ denote the hyperplane class on $\P U$, induced by the projection $p_{1} \from \P U \to \P^N$. It is easy to see that $\delta_{\text{red}}$ and $\delta_{\text{ram}}$ are linear subbundles of $\P U$.  Therefore, both divisor classes can be written in the form $h + p_{2}^{*}(D)$ for some divisor class $D \in \Pic \F$.  Furthermore, the total discriminant $\delta$ is evidently pulled back from $\P^{N}$, so $[\delta] = m\cdot h$ for some positive integer $m$.  In order to prove \autoref{indep}, we will explicitly calculate the classes $[\delta_{\text{red}}]$ and $[\delta_{\text{ram}}]$, just as we did in \autoref{divisorR}.  In order to do so, we must consider jet bundles once more.

\subsubsection{Jet bundle construction of $\delta_{\text{red}}$}

Recall the jet bundle $J_{\pi}^{n}(\O_{\F}(C))$ from \eqref{njet}. In particular, consider the cases $n =3$ and $4$, and consider the following sequence relating them: 
\begin{equation}\label{reljetseq}
0 \to \O_{\F}(C)\otimes \omega^{3}_{\pi} \to J_{\pi}^{4}(\O_{\F}(C)) \to J_{\pi}^{3}(\O_{\F}(C)) \to 0
\end{equation}
Recall that the vector bundle $U$ is formally defined as the kernel in the following sequence: 
$$
0 \to U \to H^{0}(\F, \O_{\F}(C))\otimes \O_{\F} \to J_{\pi}^{3}(\O_{\F}(C)) \to 0.
$$
where the right hand map is ``evaluation''.
Given the existence of sequence \eqref{reljetseq}, we find that $\delta_{\text{red}}$ is the projective subbundle $\P U_{\text{red}}$, where $U_{\text{red}}$ occurs in the sequence: 
$$
0 \to U_{\text{red}} \to U \to \O_{\F}(C)\otimes \omega^{3}_{\pi} \to 0.
$$
Indeed, a section of $\O_{\F}(C)$ splits off a ruling line $L$ if and only it contains a subscheme of type $4p$ in $L$. 

The divisor $\delta_{\text{red}}$ is none other than the projective subbundle $\P U_{\text{red}}$.

\subsubsection{Jet bundle construction of $\delta_{\text{ram}}$}\label{deltaram}
A slightly more complicated jet bundle will be needed in the construction of $\delta_{\text{ram}}$. 

For this, we start by considering an affine set ${\bf A}^{2} \subset \F$ with coordinate ring $k[x,y]$.  We assume that $\pi$ is given by the inclusion of rings $k[x] \hookrightarrow k[x,y]$. We focus our attention at the point $(0,0)$. 

 Consider the ideal $J = (x, y^{3}) \cap (x^{2}, xy, y^{2}) = (x^{2}, xy, y^{3})$.  A function $f \in k[x,y]$ lies in $J$ if and only if $f$ is singular at $(0,0)$ and the scheme cut out by $f=0$ intersects the line $x=0$ with multiplicity at least $3$ at $(0,0)$.  These are precisely the sections of $\O_{\F}(C)$ which $\delta_{\text{ram}}$ parametrizes. Consider the following sequence: 
\begin{equation}\label{idealseq}
0 \to (x,y^{3})/J \to k[x,y]/J \to k[x,y]/(x,y^{3}) \to 0.
\end{equation}
The kernel is generated by the class $\bar{x}$, which we may view as a generator of the conormal bundle of the line $x=0$, restricted to the point $(0,0)$. 

We can globalize this as follows.  In $\F \times \F$, we take the chain of subschemes $\Delta \subset \F \times_{\P^1} \F \subset \F \times \F$, and consider the ideal sheaves $\cI_{\Delta}$ and $\cI_{\F \times_{\P^1} \F}$.  Then we consider the ideal  $$\cJ := (\cI_{\Delta}^{3} + \cI_{\F \times_{\P^1} \F}) \cap \cI_{\Delta}^{2} .$$

We define the relevant jet bundle as: $$J_{\text{ram}}^{4}(\O_{\F}(C)) :={\text{pr}}_{1*}( {\text{pr}}_{2}^{*}(\O_{\F}(C)) \otimes \O_{\F \times \F}/\cJ).$$ (The superscript $4$ indicates the rank of the bundle.) The sequence \eqref{idealseq}, globalized over $\F$ tells us that there is a sequence: 
\begin{equation}\label{J4}
0 \to \pi^{*}(\omega_{\P^{1}}) \otimes \O_{\F}(C) \to J_{\text{ram}}^{4}(\O_{\F}(C)) \to J_{\pi}^{3}(\O_{\F}(C)) \to 0.
\end{equation}

The sequence above implies the existence of a vector bundle $U_{\text{ram}}$ related to $U$ by a sequence: 
$$
0 \to U_{\text{ram}} \to U \to \pi^{*}(\omega_{\P^{1}}) \otimes \O_{\F}(C) \to 0.
$$

The divisor $\delta_{\text{ram}}$ is the projective subbundle $\P U_{\text{ram}}$.

\begin{proof}[Proof of \autoref{indep}]
As before, let $h$ denote the hyperplane class in $\P U$ which is pulled back from $\P^{N}$ via the projection $p_{1}$. From sequences \eqref{reljetseq} and \eqref{J4}, we find the following divisor class equalities in $A^{1}(\P U)$: 
\begin{eqnarray}
\delta_{\text{red}} &=& h + p_{2}^{*}(C + 3\omega_{\pi})\\
\delta_{\text{ram}} &=& h + p_{2}^{*}(C + \pi^{*}(\omega_{\P^{1}}) )
\end{eqnarray}
Of course $\delta$ is a multiple of $h$, so if $\delta, \delta_{\text{red}}$, and $\delta_{\text{ram}}$ were to be dependent, it would follow that $C+3\omega_{\pi}$ and $C + \pi^{*}(\omega_{\P^1})$ were proportional.  This is easily seen to be impossible for the curve classes $C$ we are considering.
\end{proof}

\begin{proof}[Proof of \autoref{divT}:]
Consider the diagram:

 \begin{equation} \label{sq}
\xymatrix @C -.6pc  {
\P U_{\text{sm}} \ar[rr]^{p_{1}} \ar[d]_{q_{\tT_{1}}}&&
 \P^{N}_{\text{sm}}  \ar[d]_{q}  \\
 \tT_{1}^{\circ} \ar[rr]^{\phi} && (\H^{\dagger})^{\circ}}
 \end{equation} 
The vertical arrows are quotients by $G = \Aut \F$, and the map $p_{1}$ is $G$-equivariant.  The kernel of the pullback map $q_{t} \from A^{*} (\tT_{1}^{\circ}) \to A^{*}(\P U_{\text{sm}})$ is generated by $c_{1}(N)$ where $N$ is some line bundle on $\tT_{1}^{\circ}$. However,  because $p_{1}$ is $G$-equivariant, this line bundle $N$ is the pullback of the analogous line bundle $M$ associated with the quotient $q$, hence its first Chern class is a multiple of $\kappa_{1}$ by \autoref{divH}.  Therefore,  we conclude \autoref{divT} from \autoref{indep}, which tells us that $A^{1}(\P U_{\text{sm}}) = 0$.

\end{proof}

\begin{theorem}\label{c2}
The Chern class $c_{2}(V)$ is a multiple of $\kappa_{1}^{2}$. 
\end{theorem}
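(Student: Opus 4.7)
The plan is to combine the two main results of the preceding sections, \autoref{suppV} and \autoref{divT}, via a projection formula argument.

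First, I would invoke \autoref{suppV}, which says that $c_{2}(V) \in A^{2}(\H)$ is represented by a divisor class supported on $\cT_{1}$. Let $\nu \from \tT_{1} \to \cT_{1}$ be the normalization and write $\iota \from \cT_{1} \hookrightarrow \H$ for the inclusion, so that $(\iota \circ \nu)_{*} \from A^{0}(\tT_{1}) \to A^{1}(\H)$ sends $[\tT_{1}]$ to $[\cT_{1}]$ (up to a positive rational factor coming from the degree of $\nu$, but $\nu$ is birational so this is $1$). Since $c_{2}(V)$ is supported on $\cT_{1}$, we can write
\begin{equation*}
c_{2}(V) \;=\; (\iota \circ \nu)_{*}(\beta)
\end{equation*}
for some class $\beta \in A^{1}(\tT_{1})$.

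Next I would apply \autoref{divT}, which tells us that $A^{1}(\tT_{1})$ is generated by $\kappa_{1}$ (more precisely, by the pullback of $\kappa_{1}$ from $\H$). Hence $\beta = c \cdot (\iota \circ \nu)^{*}\kappa_{1}$ for some $c \in \Q$. By the projection formula,
\begin{equation*}
c_{2}(V) \;=\; (\iota \circ \nu)_{*}\bigl(c \cdot (\iota \circ \nu)^{*}\kappa_{1}\bigr) \;=\; c \cdot \kappa_{1} \cdot (\iota \circ \nu)_{*}[\tT_{1}] \;=\; c \cdot \kappa_{1} \cdot [\cT_{1}].
\end{equation*}

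Finally, as noted in the introduction, the divisor class $[\cT_{1}]$ is a (nonzero) rational multiple of $\kappa_{1}$, so $c_{2}(V)$ is a rational multiple of $\kappa_{1}^{2}$, as claimed.

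The content of this theorem is really already contained in \autoref{suppV} and \autoref{divT}; the only mild subtlety is the bookkeeping with the normalization $\nu$, which is harmless because we work with $\Q$-coefficients and $\nu$ is birational. I do not anticipate any real obstacle here — the hard work has been done in the previous two sections, where the divisor theory of $\cR$ and $\tT_{1}$ was pinned down by exhibiting them (on their minimal-Maroni loci) as quotients of projective bundles and applying Vistoli's theorem together with Stankova's theorem (\autoref{divH}) on $A^{1}(\H)$.
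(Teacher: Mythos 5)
Your proposal is correct and follows the paper's own argument: the paper likewise deduces \autoref{c2} directly from \autoref{suppV} (support of $c_{2}(V)$ on $\cT_{1}$) and \autoref{divT} (generation of $A^{1}(\tT_{1})$ by $\kappa_{1}$), pushing forward along the normalization $\varphi \from \tT_{1} \to \H$. Your explicit projection-formula bookkeeping just spells out what the paper leaves implicit.
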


\begin{proof}
  Let $\varphi_{*} \from A^{1}(\tT_{1}) \to A^{2}(\H)$ denote the
  proper push forward map.  \autoref{suppV} states that $c_{2}(V)$ is
  supported in $\cT_{1}$, and is therefore $\varphi_{*}(\alpha)$ for
  some cycle $\alpha \in A^{1}(\tT_{1})$. \autoref{divT} in turn
  states that $A^{1}(\tT_{1})$ is generated by
  $\varphi^{*}\kappa_{1}$. Thus, by the intersection-theoretic
  push-pull formula, we conclude that $c_{2}(V)$ is a multiple of
  $\kappa_{1} \cap \varphi_{*}[\tT_{1}] = \kappa_{1} \cap [\cT_{1}]$,
  which is in turn a multiple of $\kappa_{1}^{2}$, by
  \autoref{divH}.
\end{proof}

\section{The Chow ring of $\Hf$}\label{section:framed-chow-ring}

 In this  section we  prove the following proposition: 
\begin{proposition} \label{chow:dagger}
The Chow ring $A^{*}(\Hf)$ is generated by $\kappa_{1}$.
\end{proposition}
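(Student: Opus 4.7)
The plan is to stratify $\Hf$ by the Maroni invariant and argue by descending induction on the stratum index, combining Penev--Vakil's description of the Chow groups of the open Maroni strata (ingredient $(2)$ of the introduction) with an explicit tautological expression for each fundamental class $[\cN_k^\dagger]$. Writing
\[
\Hf \supset \cN_2^\dagger \supset \cN_3^\dagger \supset \cdots \supset \cN_K^\dagger
\]
with $K = \lfloor(g+2)/3\rfloor$, the excision sequences
\[
A^*(\cN_{k+1}^\dagger) \longrightarrow A^*(\cN_k^\dagger) \longrightarrow A^*\bigl((\cN_k^\dagger)^\circ\bigr) \longrightarrow 0
\]
will show inductively that the image of each $A^*(\cN_k^\dagger)$ in $A^*(\Hf)$ lies in $\Q[\kappa_1]$. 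Starting from the deepest stratum $\cN_K^\dagger$ (small-dimensional, Chow ring kappa-generated) and descending to $k=1$ would then give the conclusion.

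For each open stratum $(\cN_k^\dagger)^\circ$ I would follow the quotient construction of section \ref{section:quotient-Hurwitz}: realize $(\cN_k^\dagger)^\circ$ as the quotient of the discriminant complement $U_{g,k}\subset |C_k|$ on the Hirzebruch surface $\F_k$ by the connected subgroup $G_k \subset \Aut\F_k$ of fiber-preserving automorphisms. Since $U_{g,k}$ is projective space minus a hypersurface, $A^*(U_{g,k}) = \Q$; Vistoli's theorem \autoref{vistoli} applied to the connected group $G_k$ (exactly as invoked in section \ref{section:vistoli-quotient} for the $\PGL_2$-quotient $q\from \Hf \to \H$) then expresses $A^*((\cN_k^\dagger)^\circ)$ as a quotient by Chern classes of certain equivariant bundles. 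Stankova's theorem \autoref{divH} identifies the degree-one piece with $\Q\cdot\kappa_1$, and the Penev--Vakil computation concludes that each open-stratum Chow ring is $\Q[\kappa_1]$.

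The substance of the proof is expressing each $[\cN_k^\dagger] \in A^{k-1}(\Hf)$ as a polynomial in $\kappa_1$. The natural approach is to identify $\cN_k^\dagger$ as a degeneracy locus: the Tschirnhaus bundle $E_\alpha := (\alpha_*\O_\cC / \O_\cP)^\vee$ on $\cP$ has unbalanced fiberwise splitting type over exactly the Maroni loci, and a Porteous-style formula writes $[\cN_k^\dagger]$ as a polynomial in the pushforwards to $\Hf$ of products of Chern classes of $E_\alpha$. These Chern classes are tautological, and Grothendieck--Riemann--Roch applied to $\alpha$ expresses them in terms of $\kappa$-classes. In the framed setting the fixed base $\P^1$ makes all the resulting classes on $\cP$ reduce to pullbacks plus a single section class, and the final expressions for $[\cN_k^\dagger]$ collapse into $\Q[\kappa_1]$.

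The principal obstacle I anticipate is confirming precisely this collapse --- namely that the Tschirnhaus--Porteous output really lies in $\Q[\kappa_1]$ rather than in the larger tautological ring spanned by higher $\kappa_i$. Once that collapse is in hand, the descending induction via excision is formal: at each step $A^*(\cN_k^\dagger)$ is $\kappa_1$-generated by the open stratum description, and its pushforward to $\Hf$ lies in $\Q[\kappa_1]$ because both $[\cN_k^\dagger]$ itself and the pushforwards from deeper strata (by the inductive hypothesis) do.
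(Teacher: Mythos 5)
Your proposal follows essentially the same route as the paper: stratify by the Maroni invariant, quote Penev--Vakil for the Chow groups of the open strata, express the fundamental classes $[\cN^{\dagger}_{n}]$ via a Porteous-type formula for the degeneracy loci of the reduced direct image (Tschirnhaus) bundle, and assemble everything by excision. The one substantive step you explicitly leave open --- ``confirming that the Tschirnhaus--Porteous output really lies in $\Q[\kappa_1]$ rather than in the larger tautological ring'' --- is precisely the content of the paper's \autoref{push}, so as written your argument has a genuine gap at its load-bearing point.

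Here is how the paper closes it. Writing $c_1(E) = -(g+2)\sigma + \pi^{*}(X)$ and $c_2(E) = \sigma\cdot\pi^{*}(Y) + \pi^{*}(Z)$ on $\Hf\times\P^1$, Stankova's theorem (\autoref{divH}) forces $X, Y \in \Q\cdot\kappa_1$, so the only class not immediately under control is $Z = \pi_{*}(c_2(E)\cdot\sigma) \in A^{2}(\Hf)$. Applying Grothendieck--Riemann--Roch to $E$ via the sequence $0 \to \O \to \alpha_{*}\O_{\cC} \to E \to 0$, and observing that $ch_2(R^{1}f_{*}\O_{\cC})$ vanishes (because the degree-three part of the Todd class of a line bundle is zero), one gets an identity in $A^{2}(\Hf)$ in which $Z$ appears with the nonzero coefficient $g/2$ and every other term is already a multiple of $\kappa_1^{2}$. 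This pins $Z$ down and yields the collapse you were worried about; no higher $\kappa_i$ can appear because every Porteous input is a polynomial in $\pi_{*}(c_1(E)^{k}c_2(E)^{j}\sigma^{m})$ and these all reduce to $X$, $Y$, $Z$ and powers of $\kappa_1$. Two smaller points: the Maroni invariant has fixed parity, so the relevant open strata are $\cN^{\dagger}_{n}\setminus\cN^{\dagger}_{n+2}$, not $\cN^{\dagger}_{n}\setminus\cN^{\dagger}_{n+1}$; and the groups acting on the linear systems for the higher Maroni strata are not of the form $\GL_n$ or $\SL_n$, so Vistoli's theorem does not apply verbatim there --- this is why the paper simply cites \cite[Thm.~3.3]{penev-vakil} for the open strata rather than rerunning the quotient argument.
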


Our proof of \autoref{chow:dagger} requires the use of the  Maroni stratification of $\H^{\dagger}$. Let $\cN^{\dagger}_{k} \subset \Hf$ denote the locus of curves with Maroni invariant greater than or equal to $k$.  The main task in proving \autoref{chow:dagger} will be to show that the fundamental class $[\cN^{\dagger}_{k}]$ can be expressed in terms of kappa classes.

\subsection{The Maroni stratification} Every degree three cover $\alpha \from C \to \P^1$, has its {\sl reduced direct image bundle} $E_{\alpha}$ (see \cite{Miranda})  defined as: $$E_{\alpha} = \alpha_{*}\O_{C}/\O_{\P^1} .$$
The bundle $E_{\alpha}$ is a rank $2$, degree $-(g+2)$ vector bundle on the target $\P^1$, so it splits as a direct sum $\O(a) \oplus \O(b)$ where $a + b = -(g+2)$.  The Maroni invariant defined in \cite{maroni} is the integer $m = |b-a|$.

For each $k$, let $$\cN^{\dagger}_{k} \subset \H^{\dagger}$$ denote
the locally closed locus of covers having Maroni invariant at least
$k$.  The Chow groups
$A^{*}(\cN^{\dagger}_{k}\setminus \cN^{\dagger}_{k+2})$ are
tautological, by \cite[Thm.~3.3]{penev-vakil}. It remains to show that
the fundamental classes of the Maroni strata are tautological.

\subsection{The fundamental classes $[\cN^{\dagger}_{k}]$} 

\begin{proposition}\label{fun:mar} The fundamental classes $[\cN^{\dagger}_{k}] \in A^{*}(\H^{\dagger})$ are multiples of powers of $\kappa_{1}$.
\end{proposition}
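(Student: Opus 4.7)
The plan is to realize $\cN^{\dagger}_{n}$ as the rank-deficiency locus of a morphism of vector bundles on $\Hf$ constructed by pushforward from the universal reduced direct image, apply the Thom--Porteous formula, and then use the fact that $\cP = \P^{1}\times\Hf$ is a \emph{trivial} $\P^{1}$-bundle to simplify the resulting Chern class expression down to a multiple of $\kappa_{1}^{n-1}$.

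For concreteness, assume $g$ is even and $n = 2m$ (the odd case is analogous). Write $E = \alpha_{*}\O_{\cC}/\O_{\cP}$ for the universal rank-$2$ bundle and set $k = g/2 + m$. A direct check using Castelnuovo--Mumford regularity shows that the fiberwise multiplication morphism
\[
\mu \colon \pi_{*}E(k) \otimes H^{0}(\P^{1},\O(1)) \longrightarrow \pi_{*}E(k+1)
\]
is surjective at $\alpha \in \Hf$ precisely when the Maroni invariant of $\alpha$ is less than $n$. The source and target have ranks $4m$ and $2m+2$ respectively, and the expected Porteous codimension $(4m) - (2m+2) + 1 = 2m - 1$ agrees with the codimension of $\cN^{\dagger}_{n}$ from \autoref{mar:properties}. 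Thom--Porteous then gives
\[
[\cN^{\dagger}_{n}] \;=\; c_{n-1}\bigl(\pi_{*}E(k+1) \;-\; \pi_{*}E(k)^{\oplus 2}\bigr) \in A^{n-1}(\Hf).
\]

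To evaluate this virtual Chern class, I would apply Grothendieck--Riemann--Roch together with the triviality of the $\P^{1}$-bundle: setting $h = c_{1}(\O_{\cP}(1))$, we have $h^{2} = 0$ and $\mathrm{td}(\omega_{\pi}^{-1}) = 1 + h$. Writing $c_{1}(E) = A_{1} - (g+2)h$ and $c_{2}(E) = A_{2} + B_{2}h$ with $A_{1}, B_{2} \in A^{1}(\Hf)$ and $A_{2} \in A^{2}(\Hf)$ (pulled back to $\cP$), every Chern character $\mathrm{ch}_{i}(\pi_{*}E(j))$ becomes an explicit polynomial in $A_{1}, A_{2}, B_{2}$, and hence so does $c_{n-1}(K^{1} - K^{0})$.

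The main obstacle is showing that this polynomial expression collapses to a rational multiple of $\kappa_{1}^{n-1}$. By Stankova's \autoref{divH}, both $A_{1}$ and $B_{2}$ are already $\Q$-multiples of $\kappa_{1}$, so the only potentially dangerous contributions involve $A_{2}$ and other higher-codimension tautological classes appearing in the Porteous expansion. My plan is to induct on $n$: the base case $n = 2$ is exactly Stankova's theorem. For the inductive step, I would restrict the construction of $\mu$ to the normalization of $\cN^{\dagger}_{n-2}$; over this stratum the bundle $E$ acquires a canonical maximal sub-line-bundle, which allows one to rewrite the Porteous expression for $[\cN^{\dagger}_{n}]|_{\cN^{\dagger}_{n-2}}$ without any $A_{2}$-contributions. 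Combined with the inductive hypothesis $[\cN^{\dagger}_{n-2}] = c' \kappa_{1}^{n-3}$ and the projection formula, this yields $[\cN^{\dagger}_{n}] = c \kappa_{1}^{n-1}$ for some rational constant $c$.
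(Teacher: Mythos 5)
Your Thom--Porteous setup is sound and runs parallel to the paper's: where you use the multiplication map $\mu \from \pi_{*}E(k)\otimes H^{0}(\O(1)) \to \pi_{*}E(k+1)$, the paper (\autoref{port}) uses the restriction-to-a-section map $\rho_{k} \from p_{*}F(k\sigma) \to p_{*}(F(k\sigma)|_{\sigma})$; both detect the splitting type, both have the right expected codimension, and both require (as the paper makes explicit and you should too) working on the open set $U_{k} = \Hf \setminus \cN^{\dagger}_{n+2}$ where the pushforwards are actually locally free --- harmless, since $\cN^{\dagger}_{n+2}$ has codimension $n+1 > n-1$. You also correctly isolate the crux: after GRR, everything reduces to showing that the codimension-two class $A_{2} = \pi_{*}(c_{2}(E)\cdot\sigma)$ (the paper's $Z$) is a multiple of $\kappa_{1}^{2}$, since $A_{1}$ and $B_{2}$ are handled by \autoref{divH}.

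It is your resolution of that crux that has a genuine gap. The proposed induction does not close. First, knowing the restriction of the Porteous expression to the normalization $\widetilde{\cN}^{\dagger}_{n-2}$ does not by itself determine $[\cN^{\dagger}_{n}]$ as a class on $\Hf$: to apply the projection formula you must exhibit $[\cN^{\dagger}_{n}]$ as $\varphi_{*}$ of a class in $A^{2}$ of the normalization \emph{and} show that class is a pullback from $\Hf$ of something proportional to $\kappa_{1}^{2}$. Second, the canonical sub-line-bundle of $E$ over the stratum lets you write $c_{2}(E)$ as a product of two divisor classes on $\widetilde{\cN}^{\dagger}_{n-2}\times\P^{1}$, but controlling those requires knowing $A^{1}$ of the stratum's normalization --- which is not established and is essentially the kind of statement being proved, so the argument is circular as stated. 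The paper avoids all of this with a single direct computation (\autoref{push}): apply GRR to $E$ itself using $0 \to \O \to \alpha_{*}\O_{\cC} \to E \to 0$, note that $\mathrm{ch}_{2}(\pi_{!}E) = -\mathrm{ch}_{2}(R^{1}f_{*}\O_{\cC}) = f_{*}([td(T_{f})]_{3}) = 0$ because the degree-three term of the Todd class of a line bundle vanishes, and read off from the degree-two part of GRR the identity
\begin{equation*}
0 \;=\; \tfrac{g}{2}\,Z \;+\; \bigl(\text{explicit multiples of } X^{2} \text{ and } XY\bigr),
\end{equation*}
whence $Z$ is a multiple of $\kappa_{1}^{2}$. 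You should replace your inductive step with this (or an equivalent) computation; the rest of your argument then goes through.
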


Consider the universal diagram:
$$
\xymatrix @C -1.1pc  {
\cC \ar[rr]^{\alpha} \ar[rd]_f&&
\H^{\dagger} \times \P^{1} \ar[ld]^\pi \\
& \H^{\dagger}}
$$
  We let $E$ denote the universal reduced direct image bundle on $\H^{\dagger} \times \P^{1}$, and let $\sigma$ denote a horizontal section of $\pi$. 
  
 \begin{lemma} \label{push} All classes of the form $\pi_{*}(c_{1}(E)^{i} \cdot c_{2}(E)^{j} \cdot \sigma^{m})$  are multiples of powers of $\kappa_{1}$.
	\end{lemma}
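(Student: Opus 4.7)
The plan is to apply Grothendieck--Riemann--Roch to the finite flat degree-$3$ morphism $\alpha \from \cC \to \cP = \H^\dagger \times \P^1$, and then use the K\"unneth decomposition $A^*(\cP) \cong A^*(\H^\dagger) \oplus A^*(\H^\dagger)\cdot \sigma$ (valid because $\sigma^2 = 0$) to reduce the problem to showing that certain tautological pushforwards from $\cC$ are multiples of powers of $\kappa_1$. Applying GRR to the short exact sequence $0 \to \O_\cP \to \alpha_*\O_\cC \to E \to 0$, together with the identities $c_1(\omega_\pi) = -2\sigma$ (hence $c_1(\omega_\alpha) = c_1(\omega_\phi) + 2\alpha^*\sigma$) and $(\alpha^*\sigma)^2 = 0$, I would write $c_1(E)$ and $c_2(E)$ explicitly as polynomials in $\alpha_*(c_1(\omega_\phi)^a \cdot (\alpha^*\sigma)^b)$ with $b \leq 1$, and in $\sigma$. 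The first Chern class works out to $c_1(E) = -\tfrac12 \alpha_*c_1(\omega_\phi) - 3\sigma$, and a similar (longer) expression holds for $c_2(E)$.

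Next, I substitute these formulas into $\pi_*(c_1(E)^k c_2(E)^j \sigma^m)$ and expand modulo $\sigma^2 = 0$, so that only $m \in \{0,1\}$ contribute. Using the projection formula together with $\pi \circ \alpha = \phi$, each such pushforward becomes a $\Q$-linear combination of the classes
\[
\kappa_a := \phi_*(c_1(\omega_\phi)^{a+1}) \in A^a(\H^\dagger)
\qquad \text{and} \qquad
\mu_a := \phi_*(c_1(\omega_\phi)^a \cdot \alpha^*\sigma) \in A^a(\H^\dagger),
\]
for varying $a$, where the $\kappa_a$ are the usual kappa classes and the $\mu_a$ are ``marked-point'' classes recording intersection with the preimage of a fixed point of $\P^1$.

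Finally, each $\kappa_a$ and each $\mu_a$ must be shown to be a scalar multiple of $\kappa_1^a$. For $a = 1$ this is immediate from Stankova's theorem \autoref{divH}. For $a \geq 2$ I would invoke the quotient description of $\H^{\dagger,\circ}$ from section \ref{section:quotient-Hurwitz} together with Vistoli's theorem \autoref{vistoli}: since the ambient ring $A^*(U_g)$ with $U_g \subset \P^N$ an open subscheme is generated in each degree by a power of the hyperplane class (which must pull back from a multiple of $\kappa_1$ by Stankova), every class in $A^*(\H^{\dagger,\circ})$ reduces modulo the ideal of equivariant Chern classes to a multiple of a power of $\kappa_1$; and the complement $\H^\dagger \setminus \H^{\dagger,\circ}$ is either empty or the Maroni divisor, already known to be a multiple of $\kappa_1$ by section \ref{Maroni}. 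The hard part will be controlling the equivariant Chern-class contributions in higher codimension; an attractive alternative is to show that the specific linear combinations of $\kappa_a$ and $\mu_a$ appearing in step two conspire via the GRR coefficients to simplify directly to multiples of $\kappa_1^d$, bypassing the need to control each $\kappa_a$ and $\mu_a$ individually.
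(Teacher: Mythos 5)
Your opening moves are sound: Grothendieck--Riemann--Roch for the finite flat map $\alpha$ does give $c_1(E) = -\tfrac{1}{2}\alpha_*c_1(\omega_\alpha) = -\tfrac{1}{2}\alpha_*c_1(\omega_\phi) - 3\sigma$, and the decomposition $A^*(\H^\dagger\times\P^1)\cong A^*(\H^\dagger)\oplus A^*(\H^\dagger)\cdot\sigma$ is the right structural tool. The gap is in your final step. After expanding, you have reduced the lemma to the claim that $\kappa_a = \phi_*(c_1(\omega_\phi)^{a+1})$ and $\mu_a = \phi_*(c_1(\omega_\phi)^a\cdot\alpha^*\sigma)$ are multiples of $\kappa_1^a$ for \emph{every} $a$. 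For $a\geq 2$ this does not follow from Stankova's theorem, nor from the quotient description plus Vistoli as you sketch: the pullback from $A^*(\H^{\dagger,\circ})$ to the Chow ring of the open subset of $\P^N$ is surjective with kernel the ideal generated by \emph{all} Chern classes $c_i(V)$ of the bundle attached to the group action, and controlling those classes in codimension $\geq 2$ is precisely the hard content of the paper --- two full sections are devoted to the single class $c_2(V)$ for the $PGL_2$ quotient alone. The statement you have reduced to is essentially ``the tautological ring is generated by $\kappa_1$,'' i.e.\ most of the main theorem, so you cannot assume it here; and your acknowledged hope that the GRR coefficients ``conspire'' is not an argument.

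The paper's proof avoids this by arranging for only \emph{one} unknown class. Writing $c_1(E) = -(g+2)\sigma + \pi^*X$ and $c_2(E) = \sigma\cdot\pi^*Y + \pi^*Z$, the divisor classes $X,Y$ are multiples of $\kappa_1$ by \autoref{divH}, so everything reduces to the single class $Z = \pi_*(c_2(E)\cdot\sigma)\in A^2(\H^\dagger)$. A single degree-$2$ GRR identity for $E$ along $\pi$, combined with the vanishing $ch_2(R^1f_*\O_{\cC}) = f_*([td(T_f)]_3) = 0$ (the degree-$3$ part of the Todd class of a line bundle is zero), expresses $\tfrac{g}{2}Z$ as a polynomial in $X$ and $Y$, hence as a multiple of $\kappa_1^2$. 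To salvage your route you would need analogous identities pinning down every $\kappa_a$ and $\mu_a$ individually, which one GRR computation does not provide.
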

	
        \begin{proof}
          In what follows, if $\epsilon$ is an element in a Chow ring, we will denote by $[\epsilon]_{d}$  its degree $d$ graded part.

          Write 
	\begin{eqnarray*}
	c_1(E) &=& -(g+2)\sigma + \pi^{*}(X)\\
	c_2(E) &=& \sigma \cdot \pi^{*}(Y) + \pi^{*}(Z)
	\end{eqnarray*}
	for some $X, Y \in A^{1}(\H^{\dagger})$ and $Z \in A^{2}(\H^{\dagger})$.  Now $X$ and $Y$ are known to be tautological from \autoref{divH}, so \autoref{push} will follow once we know that $Z = \pi_{*}(c_2(E)\cdot \sigma)$ is a multiple of $\kappa_{1}^{2}$.  (Note that the classes $\pi_{*}(c_1(E)^{i})$ are tautological for all $i$. In fact, they are multiples of $\kappa_{1}^{i-1}$.)
	
	To understand $Z$, we apply the Grothendieck-Riemann-Roch formula to $E$. Using the exact sequence 
	\begin{equation}\label{seqE}
0 \to \O_{\H^{\dagger} \times \P^{1}} \to \alpha_{*}\O_{\cC} \to E \to 0
	\end{equation}
we conclude that	
$$
	ch(\pi_{!}E) = -ch(R^{1}f_{*}\O_{\cC}) = \pi_{*}(ch(E)(1+\sigma)) = \pi_{*}(ch(E)) + \pi_{*}(ch(E) \cdot \sigma).
$$
	The first equality comes from sequence \eqref{seqE}, while the second is the content of Grothendieck-Riemann-Roch. 
	We take the degree 2 part of the second and last parts of this chain of equalities to get
	\begin{equation}\label{grr}
	-ch_{2}(R^{1}f_{*}\O_{\cC}) = \pi_{*}\left(\dfrac{c_1(E)^{3} - 3c_{1}(E)c_{2}(E)}{6}\right) + \pi_{*}\left( \left( \dfrac{c_1(E)^{2}}{2} - c_2(E) \right) \cdot \sigma \right)
	\end{equation}
The left hand side of \eqref{grr} is $0$. Indeed, by applying Grothendieck-Riemann-Roch to the structure sheaf $\O_{\cC}$ under the map $f$, we find that 
$$-[ch_{2}(R^{1}f_{*}\O_{\cC})]_{2} = f_{*}([td(T_{f})]_{3}),$$ but the degree three part of the Todd class of a line bundle is always trivial.

  When the right hand side of \eqref{grr} is written out, the coefficient of $Z$ is $\frac{g}{2}$, which is nonzero. The remaining terms on the right hand side of \eqref{grr} are multiples of $\kappa_{1}^{2}$, so the lemma follows.	
	\end{proof}
	
	Now we claim that the classes of the Maroni loci are linear combinations of classes of the form mentioned in \autoref{push}.

To see this, let's first consider the general situation where we have a rank $2$  vector bundle $F$ on $X \times \P^1$ where $X$ is a smooth variety, such that the generic splitting type of $F$ on the $\P^1$'s is  $\O \oplus \O$ (resp.\ $\O \oplus \O(1)$). Let $p \from X \times \P^1 \to X$ denote the projection, and fix a section $\sigma$ of $p$.

Now consider the restriction map on $X$:
$$
	\rho_{k} : p_{*}F(k\sigma) \to p_{*}(F(k\sigma)|_{\sigma})
$$
	This is a map between vector bundles on the open set $U_{k} \subset X$ where the splitting type of $F$ on the fiber of $p$ is at worst $\O(-(k+1)) \oplus \O((k+1))$ (resp.\ $\O(-(k+1)) \oplus \O((k+2))$). The closed subset $Y_{k} \subset U_{k}$ where $\rho_{k}$ drops rank is precisely the locus where $F$ is equal to $\O(-(k+1)) \oplus \O((k+1))$ (resp.\ $\O(-(k+1)) \oplus \O((k+2))$), and the fundamental class $[Y_{k}] \in A^{*}(U_{k})$ is an expression in the Chern classes of  $p_{*}F(k\sigma)$ and $p_{*}(F(k\sigma)|_{\sigma})$ by the Thom-Porteous formula \cite[Thm.~14.3]{fulton-intersectiontheory}.  So we arrive at the following lemma: 

	\begin{lemma}\label{port}
	The fundamental class of the locus $Y_{k} \subset U_{k}$ in $A^{*}(U_{k})$ over which $F$ splits as \\ $\O(-(k+1)) \oplus \O((k+1))$ or $\O(-(k+1)) \oplus \O((k+2))$  is expressible in terms of the Chern classes of $p_{*}F(k\sigma)$ and $p_{*}(F(k\sigma)|_{\sigma})$, provided that $Y_{k}$ has the expected codimension.  Furthermore, the Chern classes of $p_{*}F(k\sigma)$ and $p_{*}(F(k\sigma)|_{\sigma})$ are linear combinations of classes of the form $p_{*}((c_{1}F)^{k} \cdot (c_{2}F)^{j}\cdot [\sigma]^{m})$.
	\end{lemma}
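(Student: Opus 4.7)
The plan is to unpack Lemma \ref{port} into two essentially independent assertions: $(\text{i})$ a Thom--Porteous identification of $[Y_k]$ with a polynomial in Chern classes of the two bundles, and $(\text{ii})$ an identification of those Chern classes themselves with pushforward classes of the desired form.

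For $(\text{i})$, the first thing I would verify is that $p_{*}F(k\sigma)$ and $p_{*}(F(k\sigma)|_{\sigma})$ are actually vector bundles on $U_k$, so that Thom--Porteous in the form \cite[Thm.~14.3]{fulton-intersectiontheory} is available. This is a rank count on fibers: on $U_k$ the fiberwise splitting type of $F(k\sigma)$ is at worst $\O(-1)\oplus\O(2k+1)$ (or $\O(-1)\oplus\O(2k+2)$), so $h^{1}$ vanishes and $h^{0}$ is constant, equal to $2(k+1)$ (or $2k+3$); meanwhile $p_{*}(F(k\sigma)|_{\sigma})$ is automatically a rank-$2$ bundle as it equals $\sigma^{*}F(k\sigma)$. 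Next I would check that $\rho_{k}$ drops rank precisely on $Y_{k}$: away from $Y_{k}$, evaluation at $\sigma$ is surjective on each fiber (direct verification on an $\O\oplus\O$ or $\O\oplus\O(1)$ fiber), while on a fiber of the jumped splitting the sections of the $\O(-1)$ summand are zero and the image of $\rho_{k}$ becomes one-dimensional. With that in hand, Thom--Porteous expresses $[Y_{k}]$ as a polynomial in Chern classes of the source and target of $\rho_{k}$, assuming the expected codimension.

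For $(\text{ii})$, the target is easy: since $\sigma$ is a section of $p$, pullback by $\sigma$ agrees with $p_{*}(-\cdot[\sigma])$, so
\[
p_{*}(F(k\sigma)|_{\sigma}) \;=\; \sigma^{*}F \otimes \bigl(\sigma^{*}\O(\sigma)\bigr)^{\otimes k},
\]
whose Chern classes are polynomials in $\sigma^{*}c_{i}(F)=p_{*}(c_{i}(F)\cdot[\sigma])$ and $\sigma^{*}[\sigma]=p_{*}([\sigma]^{2})$, already of the required shape. For $p_{*}F(k\sigma)$ I would apply Grothendieck--Riemann--Roch to $p$: vanishing of $R^{1}p_{*}F(k\sigma)$ on $U_{k}$ (verified above) gives $p_{!}=p_{*}$, so
\[
ch\bigl(p_{*}F(k\sigma)\bigr) \;=\; p_{*}\bigl(ch(F)\cdot e^{k[\sigma]}\cdot td(T_{p})\bigr).
\]
The relative tangent bundle $T_{p}$ is pulled back from $\P^{1}$, hence its Chern classes on $X\times\P^{1}$ are expressible via the fiber point class, which is $[\sigma]\cdot\sigma^{*}\O(-\sigma)$-corrected; concretely every monomial appearing on the right expands to a polynomial in $c_{1}(F)$, $c_{2}(F)$, and $[\sigma]$, so $p_{*}$ produces exactly the promised pushforward classes. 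Since the Chern classes of $p_{*}F(k\sigma)$ are polynomials in its Chern characters, this finishes $(\text{ii})$.

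The only genuine obstacle I foresee is keeping track of the Todd contribution in the GRR step — specifically checking that $c_{1}(T_{p})$ and its powers, when multiplied by $ch(F)\cdot e^{k[\sigma]}$ and pushed forward, really reduce to $\Q$-linear combinations of classes $p_{*}\bigl((c_{1}F)^{a}(c_{2}F)^{b}[\sigma]^{c}\bigr)$ rather than introducing some fiber-only class that pushes forward trivially but leaves behind an obstruction. In practice this is harmless because $T_{p}$ is pulled back from the second factor and $[\sigma]^{2}=[\sigma]\cdot p^{*}p_{*}[\sigma]^{2}$ reduces higher powers of $[\sigma]$ back to pushforwards, but the bookkeeping is where I would concentrate the care.
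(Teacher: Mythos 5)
Your proposal is correct and follows essentially the same route as the paper: the first assertion is the Thom--Porteous identification of $[Y_k]$ with the degeneracy locus of $\rho_k$ (which the paper sets up in the paragraph preceding the lemma and you verify in more detail, including the rank counts showing $R^1p_*F(k\sigma)=0$ so that both pushforwards are bundles), and the second assertion is the Grothendieck--Riemann--Roch computation, which is the only thing the paper's one-line proof addresses. Your closing worry about the Todd term is indeed harmless — in the intended application $\sigma$ is a horizontal section of $X\times\P^1$, so $[\sigma]$ \emph{is} the fiber point class and $[\sigma]^2=0$, making the bookkeeping trivial.
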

	
	\begin{proof}[Proof of \autoref{port}:]
	Only the last sentence needs explanation, and it follows from Grothendieck-Riemann-Roch.
	\end{proof}
	
\begin{proof}[Proof of \autoref{fun:mar}:]
We apply \autoref{port} in our context with the following substitutions: The variety $X$ is  replaced by $\H^{\dagger}$ and the bundle $F$ is replaced by $E(\lceil \frac{g+2}{2} \rceil \sigma)$.  This twist of $E$ is chosen to ensure that the resulting bundle satisfies the assumed condition that the generic splitting type of $F$ is $\O \oplus \O$ or $\O \oplus \O(1)$.  \autoref{fun:mar} then follows from an application of \autoref{push} and the fact that the Maroni strata closures $\cN^{\dagger}_{n}$ are all of expected codimension (\autoref{mar:properties}).
	
\end{proof}

\begin{proof}[Proof of \autoref{chow:dagger}:] By \cite[Thm.~3.3]{penev-vakil}, the Chow groups $A^{*}(\cN^{\dagger}_{n} \setminus \cN^{\dagger}_{n+2})$ are tautological.  By iteratively applying the excision sequence for Chow groups, and using \autoref{fun:mar}, we conclude the theorem. 

\end{proof}

\section{Further directions}\label{further}

\subsection{The low degree regime}
The Maroni stratification allowed us to establish generators of the Chow ring, while relations were provided by emptiness of certain Hurwitz strata. 

It was crucial for us that the Maroni strata occurred in ``expected codimension''.  If we try to pursue the same strategy for the next two cases $\cH_{4,g}$ and $\cH_{5,g}$, we run into obstacles. The analogues of the Maroni stratification are not well-behaved -- the strata do not necessarily occur in expected codimension, at least in $\cH_{4,g}$ \cite[Example $4.3$]{dp:pic_345}.  (The notorious bielliptic locus in $\cH_{4,g}$ turns out to be such an ill-behaved stratum, but there are many more.)

\subsection{The high degree regime} In the range $d \geq 2g+1$, we see that $\Hdg$ is an open subset of a Grassmanian bundle $\G$ over $\Pic^{d}$, the universal degree $d$ Picard scheme over $\M$. 
The bundle $\G$ contains three divisors $\cT, {\mathcal D}, {\mathcal B}$ which parametrize pencils having triple ramification, $(2,2)$-ramification, and basepoints, respectively.  By definition, $\Hdg = {\G} \setminus {\mathcal B}$, and $\Hdg^{s} = {\G} \setminus \cT \cup {\mathcal D} \cup {\mathcal B}$.  

If we fix a codimension $k$, it appears as though the algebraic cohomology  $H_{\text{alg}}^{2k}(\Hdg^{s}, \Q)$ stabilizes to $0$ as $d$ and $g$ tend to $\infty$, with the slope $d/g$ greater than $2$. This has been verified for small $k$ by the first author, and uses the main result of \cite{madsen-weiss} which shows that $H^{*}(\M, \Q)$ stabilizes to the infinite polynomial ring $\Q[\kappa_{1},\kappa_{2}, ...]$.  The general case is the subject of ongoing work. Such a result should be viewed as an instance of similar stabilization phenomenon occuring for other moduli spaces: See \cite{tommasi-hypersurfaces, vakil-wood, church-stability}.  

In the same vein, if one considers the loci $\cT$ and ${\mathcal D}$ as ``discriminants'' in the space of maps $\Hdg$, it is reasonable to wonder whether they stabilize in the Grothendieck ring as $d, g \to \infty$.

\bibliographystyle{amsalpha}

\begin{thebibliography}{Tom14}

\bibitem[BV12]{Bolognesi-Vistoli}
Michele Bolognesi and Angelo Vistoli, \emph{Stacks of trigonal curves}, Trans.
  Amer. Math. Soc. \textbf{364} (2012), no.~7, 3365--3393.

\bibitem[CL22]{Canning-Larson}
Samir Canning and Hannah Larson, \emph{Intersection theory on low-degree Hurwitz spaces},Journal für die reine und angewandte Mathematik  vol. 2022, no. 789, 2022, pp. 103-152

\bibitem[Chu12]{church-stability}
Thomas Church, \emph{Homological stability for configuration spaces of
  manifolds}, Invent. Math. \textbf{188} (2012), no.~2, 465--504.

\bibitem[DP15]{dp:pic_345}
Anand Deopurkar and Anand Patel, \emph{The {P}icard rank conjecture for the
  {H}urwitz spaces of degree up to five}, Algebra Number Theory \textbf{9}
  (2015), no.~2, 459--492.

\bibitem[Edi94]{edidin-severi}
Dan Edidin, \emph{Picard groups of {S}everi varieties}, Comm. Algebra
  \textbf{22} (1994), no.~6, 2073--2081.

\bibitem[EF09]{edidin-fulghesu-hyperelliptic}
Dan Edidin and Damiano Fulghesu, \emph{The integral {C}how ring of the stack of
  hyperelliptic curves of even genus}, Math. Res. Lett. \textbf{16} (2009),
  no.~1, 27--40.

\bibitem[Fab99]{faber-tautological}
Carel Faber, \emph{A conjectural description of the tautological ring of the
  moduli space of curves}, Moduli of curves and abelian varieties, Aspects
  Math., E33, Vieweg, Braunschweig, 1999, pp.~109--129.

\bibitem[Ful98]{fulton-intersectiontheory}
William Fulton, \emph{Intersection theory}, second ed., Ergebnisse der
  Mathematik und ihrer Grenzgebiete. 3. Folge. A Series of Modern Surveys in
  Mathematics [Results in Mathematics and Related Areas. 3rd Series. A Series
  of Modern Surveys in Mathematics], vol.~2, Springer-Verlag, Berlin, 1998.

\bibitem[FV11]{fulghesu-viviani-chowcyclic}
Damiano Fulghesu and Filippo Viviani, \emph{The {C}how ring of the stack of
  cyclic covers of the projective line}, Ann. Inst. Fourier (Grenoble)
  \textbf{61} (2011), no.~6, 2249--2275 (2012).

\bibitem[Har83]{harer:Pic_Mg}
John Harer, \emph{The second homology group of the mapping class group of an
  orientable surface}, Invent. Math. \textbf{72} (1983), no.~2, 221--239.

\bibitem[KL04]{kazaryan-lando}
M.~{\`E}. Kazaryan and S.~K. Lando, \emph{Towards the intersection theory on
  {H}urwitz spaces}, Izv. Ross. Akad. Nauk Ser. Mat. \textbf{68} (2004), no.~5,
  82--113.

\bibitem[Mar46]{maroni}
Arturo Maroni, \emph{Le serie lineari speciali sulle curve trigonali}, Ann.
  Mat. Pura Appl. (4) \textbf{25} (1946), 343--354.

\bibitem[Mir85]{Miranda}
Rick Miranda, \emph{Triple covers in algebraic geometry}, American Journal of
  Mathematics \textbf{107} (1985), no.~5, 1123--1158.

\bibitem[Moc95]{mochizuki95:_hurwit}
Shinichi Mochizuki, \emph{The geometry of the compactification of the {H}urwitz
  scheme}, Publ. Res. Inst. Math. Sci. \textbf{31} (1995), no.~3, 355--441.

\bibitem[MW07]{madsen-weiss}
Ib~Madsen and Michael Weiss, \emph{The stable moduli space of {R}iemann
  surfaces: {M}umford's conjecture}, Ann. of Math. (2) \textbf{165} (2007),
  no.~3, 843--941.

\bibitem[PV15]{penev-vakil}
N.~Penev and R.~Vakil, \emph{The {C}how ring of the moduli space of curves of
  genus six}, Algebraic Geometry \textbf{2} (2015), no.~1, 123--136.

\bibitem[SF00]{stankova}
Zvezdelina~E. Stankova-Frenkel, \emph{Moduli of trigonal curves}, J. Algebraic
  Geom. \textbf{9} (2000), no.~4, 607--662.

\bibitem[Tom14]{tommasi-hypersurfaces}
Orsola Tommasi, \emph{Stable cohomology of spaces of non-singular
  hypersurfaces}, Adv. Math. \textbf{265} (2014), 428--440.

\bibitem[Vis87]{vistoli-quotient}
Angelo Vistoli, \emph{Chow groups of quotient varieties}, J. Algebra
  \textbf{107} (1987), no.~2, 410--424.

\bibitem[VW15]{vakil-wood}
Ravi Vakil and Melanie~Matchett Wood, \emph{Discriminants in the {G}rothendieck
  ring}, Duke Math. J. \textbf{164} (2015), no.~6, 1139--1185.

\end{thebibliography}

\end{document}